\newcommand{\A}{\mathbb{A}}
\newcommand{\B}{\mathbb{B}}
\newcommand{\Gm}{\mathbb{G}_\mathrm{m}}
\newcommand{\N}{\mathbb{N}}
\renewcommand{\P}{\mathbb{P}}
 \newcommand{\Q}{\mathbb{Q}}
 \newcommand{\R}{\mathbb{R}}
 \newcommand{\Z}{\mathbb{Z}}
\newcommand{\fa}{\mathfrak{a}}
\newcommand{\fb}{\mathfrak{b}}
\newcommand{\cA}{\mathcal{A}}
\newcommand{\cE}{\mathcal{E}}
\newcommand{\cF}{\mathcal{F}}
\newcommand{\cH}{\mathcal{H}}
\newcommand{\cL}{\mathcal{L}}
\newcommand{\cO}{\mathcal{O}}
\newcommand{\cX}{\mathcal{X}}
\newcommand{\cY}{\mathcal{Y}}
\renewcommand{\a}{\alpha}
\newcommand{\e}{\varepsilon}
\newcommand{\f}{\varphi}
\newcommand{\unipar}{\varpi}
\newcommand{\la}{\lambda}
\newcommand{\om}{\omega}
\newcommand{\p}{\psi}
\newcommand{\gf}{\mathrm{gf}}
\newcommand{\an}{\mathrm{an}}
\DeclareMathOperator{\en}{E}
\DeclareMathOperator{\env}{P}
\DeclareMathOperator{\pp}{P}
\DeclareMathOperator{\qq}{Q}
\DeclareMathOperator{\Cz}{C^0}
\DeclareMathOperator{\charac}{char}
\DeclareMathOperator{\Amp}{Amp}
\DeclareMathOperator{\supp}{supp}
\DeclareMathOperator{\vol}{vol}
\DeclareMathOperator{\Pic}{Pic}
\DeclareMathOperator{\ord}{ord}
\DeclareMathOperator{\PSH}{PSH}
\DeclareMathOperator{\PL}{PL}
\DeclareMathOperator{\Hnot}{H^0}
\DeclareMathOperator{\te}{T}
\DeclareMathOperator{\Num}{N^1}
\DeclareMathOperator{\VCar}{VCar}
\newcommand{\supstar}{\mathrm{sup}^\star }
\renewcommand{\div}{\mathrm{div}}
\newcommand{\triv}{\mathrm{triv}}
\newcommand{\lin}{\mathrm{lin}}
\numberwithin{equation}{section}       
\newtheorem{prop} {Proposition} [section]
\newtheorem{defi}[prop] {Definition}
\newtheorem{lem}[prop] {Lemma}
\newtheorem{cor}[prop]{Corollary}
\newtheorem{prop-def}[prop]{Proposition-Definition}
\newtheorem*{thmA}{Theorem A} 
\newtheorem*{thmB}{Theorem B}
\newtheorem{rmk}[prop]{Remark}
\theoremstyle{remark}
\title[Addendum]{Addendum to the article `Global pluripotential theory over a trivially valued field'}
\date{\today}
\author{S{\'e}bastien Boucksom \and Mattias Jonsson}
\address{CNRS--CMLS\\
  \'Ecole Polytechnique\\
  F-91128 Palaiseau Cedex\\
  France}
\email{sebastien.boucksom@polytechnique.edu}
\address{Dept of Mathematics\\
  University of Michigan\\
  Ann Arbor, MI 48109-1043\\
  USA}
\email{mattiasj@umich.edu}
\begin{document}

\begin{abstract}
This note is an addendum to the paper `Global pluripotential theory over a trivially valued field' by the present authors, in which we prove two results.
  Let $X$ be an irreducible projective variety over an algebraically closed field field $k$, and assume that $k$ has characteristic zero, or that $X$ has dimension at most two.
  We first prove that when $X$ is smooth, the envelope property holds for any numerical class on $X$. Then we prove that for $X$ possibly singular and for an ample numerical class, the Monge--Amp\`ere energy of a bounded function is equal to the energy of its usc regularized plurisubharmonic envelope.\\
  
  Cette note est un appendice au papier `Global pluripotential theory over a trivially valued field' par les pr\'esents auteurs, dans lequel nous prouvons deux r\'esultats. Soit $X$ une vari\'et\'e projective irreductible sur un corps alg\'ebriquement clos $k$, et supposons que $k$ est de caract\'eristique nulle, ou que $X$ est de dimension au plus deux. Nous prouvons d'abord que, lorsque $X$ est lisse, la propri\'et\'e d'enveloppe est valable pour toute classe num\'erique sur $X$. Ensuite, nous prouvons que, pour $X$ possiblement singulier et pour toute classe num\'erique ample, l'\'energie de Monge--Amp\`ere de toute fonction born\'ee est \'egale \`a celle de son enveloppe plurisousharmonique regularisée. 
\end{abstract}

\maketitle

%
%
%
%
%
%
%
%
\section*{Introduction}
The purpose of this note is to strengthen two results in the article~\cite{trivval}, where we developed global pluripotential on the Berkovich analytification over a trivially valued field. The results here are used in~\cite{nakstab1new,nakstab2}. One should view the current note as an addendum to~\cite{trivval}, rather than a stand-alone paper.

Let $k$ be an algebraically closed field, and $X$ an irreducible projective variety over $k$. To any numerical class $\theta\in\Num(X)$ we associate a class $\PSH(\theta)$ of $\theta$-psh functions; these are upper semicontinuous functions $\f\colon X^\an\to\R\cup\{-\infty\}$ on the Berkovich analytification of $X$ with respect to the trivial absolute value on $k$.  We say that $\theta$ has the \emph{envelope property} if for any bounded-above family $(\f_\a)_\a$ in $\PSH(\theta)$, the function $\sup^\star_\a\f_\a$ is $\theta$-psh.
\begin{thmA}
  Assume that $X$ is smooth, and that $\charac k=0$ or $\dim X\le 2$. Then any numerical class $\theta\in\Num(X)$ has the envelope property.
\end{thmA}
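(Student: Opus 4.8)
The plan is to reduce the envelope property to the differentiability of the volume function, which is available precisely under the stated hypotheses. First I would dispose of the degenerate cases. If $\theta$ is not pseudoeffective then $\PSH(\theta)=\emptyset$ and there is nothing to prove. If $\theta$ is pseudoeffective but not big, I would fix an ample class $\omega$ and exploit the inclusions $\PSH(\theta)\subseteq\PSH(\theta+\e\omega)$ together with the continuity property $\bigcap_{\e>0}\PSH(\theta+\e\omega)=\PSH(\theta)$ from~\cite{trivval}: granting the envelope property for the big classes $\theta+\e\omega$, any $\sup^\star_\a\f_\a$ arising from a bounded-above family in $\PSH(\theta)$ then lies in every $\PSH(\theta+\e\omega)$, hence in $\PSH(\theta)$. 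This leaves the essential case in which $\theta$ is big.

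For big $\theta$ I would use the standard reduction (again from~\cite{trivval}) whereby the envelope property follows once one knows that the plurisubharmonic envelope $P_\theta(f)$ of a Fubini--Study function $f\in\FS(\theta)$ is itself $\theta$-psh. The key is to identify this envelope with asymptotic cohomological data: representing $\theta$ by a big line bundle $L$ and $f$ by a model $\R$-divisor, the value of $P_\theta(f)$ at a divisorial valuation is computed by the asymptotic vanishing order governed by the volumes of the twists of $L$, so that $P_\theta(f)$ corresponds, as a $b$-divisor, to the positive part $P$ in the divisorial Zariski decomposition $D=P+N$ of the associated data on sufficiently high smooth models of $X$.

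The heart of the matter is then that this positive part defines a genuine $\theta$-psh function, that is, that $P$ is nef as a $b$-divisor and carries no excess mass along the exceptional locus. This is exactly the \emph{orthogonality property} $\langle P^{n-1}\rangle\cdot N=0$, equivalently the assertion that $t\mapsto\vol(\theta+t\b)$ is differentiable with derivative $n\,\langle\theta^{n-1}\rangle\cdot\b$. I expect this to be the main obstacle, and it is the sole point where the hypothesis enters: orthogonality, i.e.\ differentiability of the volume, is established by Boucksom--Favre--Jonsson through Fujita approximation, which rests on resolution of singularities and hence requires $\charac k=0$, while for $\dim X\le 2$ it follows from the classical Zariski decomposition of surfaces, valid in arbitrary characteristic.

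Finally I would assemble the pieces: orthogonality yields $P_\theta(f)^\star=P_\theta(f)\in\PSH(\theta)$ for every $f\in\FS(\theta)$, the reduction of the second paragraph upgrades this to the full envelope property for big $\theta$, and the perturbation argument of the first paragraph propagates it to all pseudoeffective, hence to all numerical, classes. Throughout, the smoothness of $X$ is what guarantees that the divisorial Zariski decomposition and the $b$-divisor formalism behave well, and that every model is dominated by a smooth SNC model on which the orthogonality relation can be tested.
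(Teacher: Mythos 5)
Your framing and reductions agree with the paper's: non-psef classes are trivial, the psef case reduces to the big case by perturbation (this is \cite[Lemma~5.9]{trivval}), and by Lemma~\ref{lem:contenv} the envelope property for a big class $c_1(L)$ reduces to showing $\env_L(\f)\in\PSH(L)$ for every $\f\in\PL(X)$. Your identification of the envelope at divisorial valuations with asymptotic vanishing orders is also sound in spirit (cf.\ Lemma~\ref{lem:FSenv}, which compares $\env_L$ with the Fubini--Study envelope $\qq_L$ away from $\B_+(L)$). The gap is your central claim that pshness of this envelope ``is exactly'' the orthogonality property $\langle P^{n-1}\rangle\cdot N=0$, equivalently differentiability of the volume function. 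You give no argument for this implication, and it reverses the actual logic of the subject: orthogonality is the statement that $\MA(\env_L(\f))$ puts no mass on the open set $\{\env_L(\f)<\f\}$, and to even formulate it one must already know that $\env_L(\f)$ (or its usc regularization) is psh, so that its Monge--Amp\`ere measure is defined. In the work of Boucksom--Favre--Jonsson, continuity of envelopes is proved first (by multiplier ideals) and orthogonality is proved separately afterwards via Fujita approximation; the latter is an output of the theory, not an engine that can produce pshness.

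The real difficulty, which your scheme does not address, is that in this non-Archimedean setting pshness is by definition an approximation property: a psh function must be a \emph{decreasing} limit of Fubini--Study functions, and this cannot be verified by numerical invariants alone. The envelope $\qq_L(\f_\cL)$ arises naturally as an \emph{increasing} limit of such functions (Lemma~\ref{lem:envline}, via the base ideals $\fa_m$ on a test configuration), and the whole problem is to convert increasing into decreasing. The paper does this by resolving the test configuration $\cX$ (possible since $\charac k=0$, or since $\dim\cX\le3$ when $\dim X\le 2$) and replacing $\fa_m$ by multiplier ideals $\fb_m$ (test ideals in positive characteristic): subadditivity together with \emph{uniform global generation} produces the decreasing net $\f_\cL+m^{-1}\f_{\fb_m}+m^{-1}\f_\cA$ of Fubini--Study competitors squeezing down onto the envelope. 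Differentiability of the volume supplies no such approximation, and it cannot by itself: in characteristic zero the volume function is differentiable on the big cone of \emph{every} projective variety (pull back to a resolution), yet the envelope property fails for some non-unibranch varieties, so any valid proof must exploit smoothness through a mechanism your sketch never identifies. Note finally that for $\dim X=2$ and $\charac k>0$ the relevant asymptotic invariants live on models of the three-dimensional space $\cX$, not on the surface $X$, so classical surface Zariski decomposition does not even apply to the right space; the paper instead uses resolution of threefold singularities together with test ideals, following \cite{GJKM19}.
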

In~\cite[Theorem~5.20]{trivval}, this was established for nef classes $\theta$ following~\cite{siminag}, and the proof here is not so different.

\medskip
For the second result we allow $X$ to be singular, but work with an \emph{ample} class $\omega\in\Num(X)$. The $\om$-psh envelope $\env_\om(\f)$ of a bounded function $\f\colon X^\an\to\R$ is defined as the supremum of all functions $\p\in\PSH(\om)$ with $\p\le\f$, and the envelope property for $\omega$ is equivalent to \emph{continuity of envelopes} in the sense of $\env_\om(\f)$ being continuous whenever $\f$ is continuous. It is also equivalent to the usc envelope $\env_\om^\star(\f)$ being $\om$-psh for any bounded function $\f$.

In~\cite{trivval} we also defined the \emph{Monge--Amp\`ere energy} $\en_\om(\f)\in\R\cup\{-\infty\}$ of any bounded-above function $\f\colon X^\an\to\R\cup\{-\infty\}$. We did this first for $\om$-psh functions in terms of an energy pairing ultimately deriving from intersection numbers on compactified test configurations, see~\S\ref{sec:enpair} below, then for general bounded-above functions $\f$, setting
$$
\en_\om(\f):=\sup\{\en_\om(\p)\mid \p\in\PSH(\om), \p\le\f\}.
$$
We say that $(X,\om)$ satisfies the \emph{weak envelope property} if there exists a projective birational morphism $\pi\colon\tilde X\to X$ and an ample class $\tilde\om\in\Num(\tilde X)$ such that $(\tilde X,\tilde\om)$ has the envelope property and $\tilde\om\ge\pi^\star\om$ (by which we mean $\tilde\om-\pi^\star\om$ is nef). It follows from~\cite[Theorem~5.20]{trivval} that the weak envelope property holds when $\charac k=0$ or $\dim X\le2$.
\begin{thmB}
  Assume that $\om\in\Num(X)$ is an ample class, and that the weak envelope property holds for $(X,\om)$. Then, for any bounded function $\f\colon X^\an\to\R$, we have 
  \begin{equation*}
    \en_\om(\f)=\en_\om(\env_\om(\f))=\en_\om(\env_\om^\star(\f)).
  \end{equation*}
\end{thmB}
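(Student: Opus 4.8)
Write $u:=\env_\om(\f)$ and $u^\star:=\env_\om^\star(\f)$; both are bounded, and $u\le\f\le u^\star$ may fail pointwise only in that $u^\star$ can exceed $\f$. The plan is to dispose of the two outer equalities by formal manipulation of the supremum defining $\en_\om$ on bounded functions, and then to concentrate all the content in a single inequality. From the definition $\en_\om(\f)=\sup\{\en_\om(\p)\mid\p\in\PSH(\om),\ \p\le\f\}$, the conditions $\p\le\f$ and $\p\le u$ cut out the \emph{same} set of $\p\in\PSH(\om)$: any $\p\le\f$ obeys $\p\le u$ by maximality of the envelope, while $u\le\f$ gives the reverse implication. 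Hence $\en_\om(\f)=\en_\om(u)$, the first asserted equality. The same supremum description makes $\en_\om$ monotone, so $u\le u^\star$ yields $\en_\om(u)\le\en_\om(u^\star)$. Everything therefore reduces to the reverse inequality
\begin{equation*}
  \en_\om(u^\star)\le\en_\om(u)=\en_\om(\f).
\end{equation*}

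To prove this I would not compare $u^\star$ with $u$ directly (the latter is not usc, hence not $\om$-psh), but instead realize $u^\star$ as a limit of honest competitors for $\f$. Concretely, from the defining family $\{\p\in\PSH(\om)\mid\p\le\f\}$ extract a countable increasing sequence $v_1\le v_2\le\cdots\le\f$ whose supremum has the same usc regularization as $u$, that is $(\sup_j v_j)^\star=u^\star$. This Choquet-type selection is exactly what the negligible-set theorem provides: the usc regularization of the supremum of a family is already attained along a countable subfamily, the difference being supported on the negligible set $N:=\{u<u^\star\}$. Granting in addition the continuity of the energy along increasing sequences, namely $\en_\om\bigl(\sup^\star_j v_j\bigr)=\lim_j\en_\om(v_j)$, and using that each $v_j\le\f$ forces $\en_\om(v_j)\le\en_\om(\f)$, one obtains $\en_\om(u^\star)=\lim_j\en_\om(v_j)\le\en_\om(\f)$, as required. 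In short, the whole theorem rests on the single principle that the Monge--Amp\`ere energy is insensitive to the usc regularization of an envelope.

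The two ingredients just invoked---the negligible-set theorem and the monotone continuity of the energy---belong to the pluripotential calculus of~\cite{trivval}, which is available for a class precisely when that class enjoys the envelope property. Since $(X,\om)$ is assumed only to satisfy the \emph{weak} envelope property, I would run the computation on the model it furnishes: a projective birational morphism $\pi\colon\tX\to X$ with $\tom\in\Num(\tX)$ ample, $(\tX,\tom)$ having the envelope property, and $\tom\ge\pi^\star\om$. The envelope $u$ and its regularization $u^\star$ pull back to $\env_{\pi^\star\om}(\f\circ\pi)$ and its usc regularization, the set $N$ pulls back to the corresponding negligible set on $\tX$, and the inequality is deduced upstairs and transported back to $X$ by the birational invariance of the energy pairing (the projection formula for the underlying intersection numbers).

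The hard part will be this transfer, and specifically the mismatch of classes it involves. Birational invariance ties the energy on $X$ to the energy of the \emph{pulled-back} class $\pi^\star\om$ on $\tX$, which is only nef and big; yet the envelope property---and with it the negligible-set theorem and the continuity of the energy---is guaranteed only for the strictly larger ample class $\tom$. I therefore expect the decisive step to be checking that negligibility of $N$, and the attendant selection and continuity statements, are insensitive to enlarging the class from $\pi^\star\om$ to $\tom$ along the inequality $\tom\ge\pi^\star\om$. This stability in the cohomology class is exactly the purpose of the condition $\tom\ge\pi^\star\om$ built into the weak envelope property, and verifying it---rather than the formal manipulation of the energy---is where the real work lies.
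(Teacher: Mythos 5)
Your outer reductions agree with the paper's and are fine: the competitors for $\f$ and for $u=\env_\om(\f)$ coincide, so $\en_\om(\f)=\en_\om(u)$, and monotonicity gives $\en_\om(u)\le\en_\om(u^\star)$, so everything rests on $\en_\om(u^\star)\le\en_\om(u)$. The gap is in your mechanism for this last inequality. The continuity statement you invoke, $\en_\om\bigl(\sup^\star_j v_j\bigr)=\lim_j\en_\om(v_j)$, is not an available ingredient on $X$: without the envelope property, $u^\star=(\sup_j v_j)^\star$ need not be $\om$-psh, so $\en_\om(u^\star)$ is only \emph{defined} as a supremum over competitors $\p\in\PSH(\om)$ with $\p\le u^\star$ --- and these competitors need not satisfy $\p\le\f$, precisely because $u^\star$ can exceed $\f$. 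Monotone continuity of $\en_\om$ along $(v_j)$ in this situation is therefore not a quotable lemma; it is essentially a restatement of Theorem~B itself, so your argument is circular at its crucial point. (The Choquet-type countable selection is also both unnecessary --- the competitor family is stable under finite maxima, so the net of finite maxima has pointwise supremum exactly $u$, and no negligible-set theorem is needed --- and unavailable, since negligibility statements in this theory themselves rest on the envelope property, and $X^\an$ need not be metrizable, so Choquet's lemma does not apply as stated.)

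You correctly sense that the work must happen on the model $\pi\colon\tX\to X$ furnished by the weak envelope property, and you correctly flag the mismatch between $\pi^\star\om$ and $\tom$; but you leave the decisive step explicitly undone (``where the real work lies''), and the way the paper closes it is not the one you anticipate: no negligibility statement is ever transferred. Instead, writing $u=\sup_i\p_i$ for an increasing net of bounded functions $\p_i\in\PSH(\om)$ with $\p_i\le\f$, the bridge quantity is the extended energy pairing $(\pi^\star\om,\tilde\tau^\star)^{n+1}$, where $\tilde\tau^\star:=(\pi^\star u)^\star$ is $\tom$-psh by the envelope property upstairs (note that the paper works with $(\pi^\star u)^\star$, not with the pullback of $u^\star$ nor with the envelope of $\pi^\star\f$ upstairs --- these could a priori differ, another soft spot in your outline). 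One inequality comes from the continuity of the pairing along increasing nets, applied upstairs with the merely nef-and-big class $\pi^\star\om$ paired against finite-energy functions, together with birational invariance (Lemma~\ref{lem:enpairpull}): this gives $(\pi^\star\om,\tilde\tau^\star)^{n+1}\le(n+1)\vol(\om)\,\en_\om(u)$. The other inequality is the idea missing from your proposal: since $u=u^\star$ on $X^\div$ and $\om$-psh functions are determined by their values on divisorial points, any bounded competitor $\p\le u^\star$ on $X^\an$ satisfies $\p\le u$ on $X^\div$, hence $\pi^\star\p\le\tilde\tau^\star$ on $\tX^\div$ and then on all of $\tX^\an$; by Lemma~\ref{lem:enpairpull} and monotonicity of the pairing, $(\om,\p)^{n+1}=(\pi^\star\om,\pi^\star\p)^{n+1}\le(\pi^\star\om,\tilde\tau^\star)^{n+1}$, and taking the supremum over $\p$ gives $(n+1)\vol(\om)\,\en_\om(u^\star)\le(\pi^\star\om,\tilde\tau^\star)^{n+1}$. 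It is this divisorial-point comparison --- not stability of negligible sets under enlarging the class from $\pi^\star\om$ to $\tom$ --- that makes the transfer between $X$ and $\tX$ work.
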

The first equality is definitional, see~\cite[(8.2)]{trivval}, and the second equality follows from~\cite[Proposition~8.3]{trivval} if $\om$ has the envelope property. The main content of Theorem~B is thus the second equality when the envelope property is unknown or even fails (for example, when $X$ is not unibranch).
%
%
\subsection*{Acknowledgement}
  The second author was partially supported by NSF grants DMS-1900025 and DMS-2154380.
   
%
%
%
%
\section{Preliminaries}
Throughout the paper, $X$ is an irreducible projective variety over an algebraically closed field $k$.
%
%
\subsection{The $\theta$-psh envelope}
Fix any numerical class $\theta\in\Num(X)$. We refer to~\cite[\S4]{trivval} for the definition of the class $\PSH(\theta)$ of $\theta$-psh functions. We have that $\PSH(\theta)$ is nonempty  only if $\theta$ is psef, whereas $\PSH(\theta)$ contains the constant functions iff $\theta$ is nef.
\begin{defi}\label{defi:pshenv} The \emph{$\theta$-psh envelope} of a function $\f\colon X^\an\to\R\cup\{\pm\infty\}$ is the function $\env_\theta(\f)\colon X^\an\to\R\cup\{\pm\infty\}$ defined as the pointwise supremum
$$
\env_\theta(\f):=\sup\left\{\p\in\PSH(\theta)\mid\p\le\f\right\}.
$$
\end{defi}
Thus $\env_\theta(\f)\equiv-\infty$ iff there is no $\p\in\PSH(\theta)$ with $\p\le\f$. When $\theta=c_1(L)$ for a $\Q$-line bundle $L$, we write $\pp_L:=\pp_\theta$.
Despite the name, $\env_\theta(\f)$ is not always $ \theta$-psh (and indeed not even usc in general).  
However, it is clear that 
\begin{itemize}
\item $\f\mapsto\env_ \theta(\f)$ is increasing;
\item $\env_ \theta(\f+c)=\env_ \theta(\f)+c$ for all $c\in\R$.
\end{itemize}
The envelope operator is also continuous along increasing nets of lsc functions: 

\begin{lem}\label{lem:envlsc} If $\f\colon X^\an\to\R\cup\{+\infty\}$ is the pointwise limit of an increasing net $(\f_j)$ of bounded-below, lsc functions, then 
$\env_\theta(\f_j)\nearrow\env_\theta(\f)$ pointwise on $X^\an$. 
\end{lem}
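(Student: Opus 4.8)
The plan is to prove the two inequalities between $\sup_j\env_\theta(\f_j)$ and $\env_\theta(\f)$ separately, the first being immediate and the second requiring a compactness argument. First I would dispose of the easy direction. Since $(\f_j)$ is increasing with pointwise limit (equivalently, pointwise supremum) $\f$, monotonicity of the envelope operator gives that $(\env_\theta(\f_j))_j$ is an increasing net with $\env_\theta(\f_j)\le\env_\theta(\f)$ for every $j$. Hence $\sup_j\env_\theta(\f_j)$ exists pointwise and satisfies $\sup_j\env_\theta(\f_j)\le\env_\theta(\f)$. Everything then reduces to the reverse pointwise inequality $\env_\theta(\f)\le\sup_j\env_\theta(\f_j)$.

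For the reverse inequality, by the very definition of $\env_\theta(\f)$ as a pointwise supremum it is enough to check that every $\psi\in\PSH(\theta)$ with $\psi\le\f$ satisfies $\psi\le\sup_j\env_\theta(\f_j)$ pointwise. Here lies the only real difficulty: the inequality $\psi\le\f$ does not by itself yield $\psi\le\f_j$ for any single index $j$, since $\psi$ may touch $\f$ while each $\f_j$ stays strictly below $\f$. To get around this I would fix $\e>0$ and work with the perturbed competitor $\psi-\e$, which still lies in $\PSH(\theta)$ (the class being stable under additive constants, as reflected by $\env_\theta(\f+c)=\env_\theta(\f)+c$). The goal then becomes to find a single index $j$ with $\psi-\e\le\f_j$ on all of $X^\an$; granting this, $\psi-\e\le\env_\theta(\f_j)\le\sup_j\env_\theta(\f_j)$, and letting $\e\downarrow0$ and taking the supremum over $\psi$ finishes the argument.

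The key step, which I expect to be the crux, is a compactness argument producing that index. Consider the superlevel sets $K_j:=\{x\in X^\an\mid \psi(x)-\e\ge\f_j(x)\}$. Since $\psi$ is upper semicontinuous (being $\theta$-psh) and $\f_j$ is lower semicontinuous, the function $\psi-\e-\f_j$ is usc, so each $K_j$ is closed; and since $(\f_j)$ is increasing, the $K_j$ decrease with $j$. Their intersection is $\{\psi-\e\ge\f\}$, which is empty: where $\psi>-\infty$ one has $\psi-\e<\psi\le\f$, and where $\psi=-\infty$ one has $\psi-\e=-\infty<\f_j$ because each $\f_j$ is bounded below. As $X^\an$ is compact (here is where projectivity of $X$ enters), a decreasing net of closed sets with empty intersection must have an empty member: the failure of the finite intersection property yields a finite subfamily with empty intersection, and directedness of the index set together with the nesting lets one pass to a single $K_{j_0}=\emptyset$. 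Thus $\psi-\e<\f_{j_0}$ everywhere, which is exactly the index sought.

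The roles of the hypotheses are then transparent, and I would flag them explicitly: lower semicontinuity of the $\f_j$ is used precisely to make the $K_j$ closed, the uniform lower bound is used to rule out the polar locus $\{\psi=-\infty\}$ from $\bigcap_j K_j$, and compactness of $X^\an$ is what converts the pointwise domination $\psi-\e<\sup_j\f_j$ into domination by a single $\f_{j_0}$. No regularity of $\env_\theta(\f)$ itself is needed, so the statement holds even though the envelope need not be usc or $\theta$-psh in general.
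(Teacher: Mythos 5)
Your proof is correct and takes essentially the same route as the paper's: the easy inequality via monotonicity, an $\e$-perturbation of the competitor $\psi\in\PSH(\theta)$, and compactness of $X^\an$ (usc vs.\ lsc) to promote the pointwise strict inequality $\psi-\e<\f$ to domination $\psi-\e<\f_{j_0}$ by a single member of the net. The paper compresses your closed-superlevel-set / finite-intersection argument into the phrase ``a simple variant of Dini's lemma,'' which is exactly what you have spelled out.
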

\begin{proof} We trivially have $\lim_j\env_\theta(\f_j)=\sup_j\env_\theta(\f_j)\le\env_\theta(\f)$. Pick $\e>0$ and $\p\in\PSH(\theta)$ such that $\p\le\f$, and hence $\p<\f+\e$. Since $\p$ is usc and the $\f_j$ is lsc, a simple variant of Dini's lemma shows that $\p<\f_j+\e$ for all $j$ large enough, and hence $\p\le\env_\theta(\f_j)+\e$. Taking the supremum over $\p$ yields $\env_\theta(\f)\le\sup_j\env_\theta(\f_j)$, and we are done. 
\end{proof}

As in~\cite[Lemma 7.30]{BE}, the envelope property admits the following useful reformulation. 

\begin{lem}\label{lem:contenv} If $\PSH(\theta)\ne\emptyset$, then the following statements are equivalent:
\begin{itemize}
\item[(i)] $ \theta$ has the envelope property;
\item[(ii)] for any function $\f\colon X^\an\to\R\cup\{\pm\infty\}$, we have 
$$
\env_ \theta(\f)\equiv-\infty,\,\env_ \theta(\f)^\star\equiv+\infty,\text{ or }\env_\theta(\f)^\star\in\PSH(\theta);
$$
\item[(iii)] $\f\in\PL(X)\Longrightarrow\env_ \theta(\f)\in\PSH(\theta)$.
\end{itemize}
\end{lem}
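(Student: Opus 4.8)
The plan is to prove the cycle $(i)\Rightarrow(ii)\Rightarrow(iii)\Rightarrow(i)$, the last implication being the substantial one. For $(i)\Rightarrow(ii)$, fix an arbitrary $\f$ and consider the family $\mathcal{F}=\{\p\in\PSH(\theta)\mid\p\le\f\}$, whose pointwise supremum is $\env_\theta(\f)$. If $\mathcal{F}=\emptyset$ then $\env_\theta(\f)\equiv-\infty$, the first case. Otherwise I distinguish according to whether $\mathcal{F}$ is bounded above. Since every $\p\in\PSH(\theta)$ attains its maximum at the trivial point $\xtriv$, the family is bounded above iff $\sup_{\p\in\mathcal F}\p(\xtriv)<+\infty$, in which case $(i)$ applies directly and yields $\env_\theta(\f)^\star=\supstar_{\p}\p\in\PSH(\theta)$, the third case. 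If instead $\mathcal F$ is unbounded, then $\env_\theta(\f)(\xtriv)=+\infty$; invoking the behaviour of $\theta$-psh functions under the scaling $\R_{>0}$-action, which bounds their variation along scaling rays uniformly in the function, this forces $\env_\theta(\f)^\star\equiv+\infty$, the second case.

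For $(ii)\Rightarrow(iii)$, let $\f\in\PL(X)$, so that $\f$ is continuous and bounded. Choosing any $\p_0\in\PSH(\theta)$ and translating it down by a large constant produces a competitor $\le\f$, so $\env_\theta(\f)\not\equiv-\infty$; and $\env_\theta(\f)\le\sup\f<+\infty$ gives $\env_\theta(\f)^\star\not\equiv+\infty$. Hence $(ii)$ leaves only the possibility $\env_\theta(\f)^\star\in\PSH(\theta)$. Since $\f$ is continuous, hence usc, and $\env_\theta(\f)\le\f$, monotonicity of the usc regularization gives $\env_\theta(\f)^\star\le\f$; thus $\env_\theta(\f)^\star$ is itself a competitor in the supremum defining $\env_\theta(\f)$, whence $\env_\theta(\f)=\env_\theta(\f)^\star\in\PSH(\theta)$.

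The heart of the matter is $(iii)\Rightarrow(i)$, which I would obtain by approximation. First I upgrade $(iii)$ to continuous functions: using that $\PL(X)$ is dense in $C^0(\Xan)$, that $\f\mapsto\env_\theta(\f)$ is $1$-Lipschitz for the sup-norm (immediate from the two displayed properties of $\env_\theta$), and that $\PSH(\theta)$ is closed under uniform limits, one gets $\env_\theta(g)\in\PSH(\theta)$ for every $g\in C^0(\Xan)$. Now let $(\f_\a)$ be a family in $\PSH(\theta)$ bounded above, set $u:=\sup_\a\f_\a$ and let $u^\star$ be its regularization. On the compact space $\Xan$ the usc, bounded-above function $u^\star$ is the infimum of a decreasing net $(g_j)$ of continuous functions. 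Each $\env_\theta(g_j)\in\PSH(\theta)$, the net $(\env_\theta(g_j))$ is decreasing, and setting $h:=\inf_j\env_\theta(g_j)$ we have, on the one hand, $\f_\a\le u^\star\le g_j$, hence $\f_\a\le\env_\theta(g_j)$ and $u\le h$; on the other hand $\env_\theta(g_j)\le g_j$ gives $h\le u^\star$. Since $h$ is usc and $u\le h\le u^\star$, we conclude $h=u^\star$, exhibiting $u^\star$ as the limit of a decreasing net of $\theta$-psh functions; as $u^\star\ge\f_\a\not\equiv-\infty$, closedness of $\PSH(\theta)$ under decreasing limits gives $u^\star\in\PSH(\theta)$, which is $(i)$.

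The main obstacle is precisely this last implication: it is where the hypothesis on $\PL$ functions must be propagated to arbitrary bounded-above families, and it rests on the two approximation facts from~\cite{trivval} — density of $\PL(X)$ in $C^0(\Xan)$ and stability of $\PSH(\theta)$ under uniform and decreasing limits — together with the representation of a bounded-above usc function on the compact space $\Xan$ as a decreasing net of continuous functions. The only other delicate point is the trichotomy in $(i)\Rightarrow(ii)$, which relies on the maximum principle at $\xtriv$ and on the scaling behaviour of $\theta$-psh functions.
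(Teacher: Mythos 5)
Your proof follows the same cycle (i)$\Rightarrow$(ii)$\Rightarrow$(iii)$\Rightarrow$(i) as the paper, and two of the three implications are essentially the paper's. Your (ii)$\Rightarrow$(iii) is identical. Your (iii)$\Rightarrow$(i) is a correct and clean reorganization: you first upgrade (iii) to all of $\Cz(X)$ using density of $\PL(X)$, the $1$-Lipschitz property of $\env_\theta$ for the sup-norm, and stability of $\PSH(\theta)$ under uniform limits, and then run the sandwich argument with a decreasing net of continuous functions; the paper instead keeps the approximants in $\PL(X)$ directly, but the ingredients and the conclusion (writing $\supstar_\a\f_\a$ as a decreasing limit of $\theta$-psh envelopes) are the same.

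The one genuine problem is the unbounded case of (i)$\Rightarrow$(ii). The fact you invoke --- that the scaling $\R_{>0}$-action ``bounds the variation of $\theta$-psh functions along scaling rays uniformly in the function'' --- is not correct as a general statement: the supremum of $\sup\p-\p(v)$ over $\p\in\PSH(\om)$ is precisely the Alexander--Taylor capacity $\te_\om(v)$, and this is $+\infty$ at some points of $X^\an$, so no uniform bound holds along every ray. What is true, and what the paper uses, is the following: choose $\om\in\Amp(X)$ with $\om\ge\theta$, so that $\cF\subset\PSH(\om)$ (note that the capacity is defined relative to an ample class, so this enlargement step is needed); then for every \emph{divisorial} point $v\in X^\div$ one has $\p(v)\ge\sup\p-\te_\om(v)$ with $\te_\om(v)<+\infty$, hence $\env_\theta(\f)(v)=+\infty$ for all $v\in X^\div$ when the suprema $\sup\p$, $\p\in\cF$, are unbounded; finally, the density of $X^\div$ in $X^\an$ is what propagates this to $\env_\theta(\f)^\star\equiv+\infty$ everywhere. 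Your sketch is missing both the restriction to a dense set of points where a uniform bound actually holds and the density step; your appeal to the maximum principle at $\xtriv$ to characterize boundedness of $\cF$ is fine, and with the above repair the implication (and hence the whole proof) goes through.
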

\begin{proof} First assume (i). Pick any $\f\colon X^\an\to\R\cup\{\pm\infty\}$, and suppose that the set $\cF:=\left\{\p\in\PSH(\theta)\mid\p\le\f\right\}$ is nonempty, so that $\env_ \theta(\f)\not\equiv-\infty$. If the functions in $\cF$ are uniformly bounded above, then $\env_ \theta(\f)^\star\in\PSH(\theta)$, by (i). If not, choose $\om\in\Amp(X)$ with $\om\ge\theta$, and hence $\cF\subset\PSH(\om)$. By the definition of the Alexander--Taylor capacity, see~\cite[\S4.6]{trivval}, we then have 
$$
\env_ \theta(\f)(v)=\sup\left\{\p(v)\mid \p\in\cF\right\}\ge\sup\left\{\sup\p\mid\p\in\cF\right\}-\te_\om(v)=+\infty
$$ 
for all $v\in X^\div$, and hence $\env_\theta(\f)^\star\equiv+\infty$, by density of $X^\div$. This proves (i)$\Rightarrow$(ii). 

Next we prove (ii)$\Rightarrow$(iii), so pick $\f\in\PL(X)$. Since $\f$ is bounded and $\PSH(\theta)$ is nonempty and invariant under addition of constants, we have $\env_\theta(\f)\not\equiv-\infty$.
Now $\env_\theta(\f)\le\f$ implies $\env_ \theta(\f)^\star\le\f$ since $\f$ is usc. In particular, $\env_\theta(\f)^\star\not\equiv+\infty$, so $\env_\theta(\f)^\star\in\PSH(\theta)$ by~(ii).
Thus $\env_\theta(\f)^\star$ is a competitor in the definition of $\env_ \theta(\f)$, so
$\env_ \theta(\f)=\env_ \theta(\f)^\star$ is $\theta$-psh. 

Finally, we prove (iii)$\Rightarrow$(i), following~\cite[Lemma 7.30]{BE}. Let $(\f_i)$ be a bounded-above family in $\PSH( \theta)$, and set $\f:=\supstar_i\f_i$. Since $\f$ is usc and $X^\an$ is compact, we can find a decreasing net $(\p_j)$ in $\Cz(X)$ such that $\p_j\to\f$. By density of $\PL(X)$ in $\Cz(X)$ wrt uniform convergence (see~\cite[Theorem~2.2]{trivval}), we can in fact assume $\p_j\in\PL(X)$, and hence $\env_\theta(\p_j)\in\PSH(\theta)$, by~(iii). For all $i,j$, we have $\f_i\le\p_j$, and hence $\f_i\le \env_ \theta(\p_j)$, which in turn yields $\f\le \env_ \theta(\p_j)\le\p_j$. We have thus written $\f$ as the limit of the decreasing net of $\theta$-psh functions $\env_\theta(\p_j)$, which shows that $\f$ is $ \theta$-psh.
\end{proof}

\begin{cor}\label{cor:envusc} Assume that $ \theta$ has the envelope property, and consider a usc function $\f\colon X^\an\to\R\cup\{-\infty\}$. Then: 
\begin{itemize}
\item[(i)] $\env_ \theta(\f)$ is $ \theta$-psh, or $\env_ \theta(\f)\equiv-\infty$; 
\item[(ii)] if $\f$ is the limit of a decreasing net $(\f_j)$ of bounded-above, usc functions, then $\env_ \theta(\f_j)\searrow\env_ \theta(\f)$.
\end{itemize}
\end{cor}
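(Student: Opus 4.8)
The plan is to deduce both parts from the trichotomy in Lemma~\ref{lem:contenv}(ii), valid since $\theta$ has the envelope property, together with the stability of $\PSH(\theta)$ under decreasing limits.

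For~(i), I would run the same argument as in the implication (ii)$\Rightarrow$(iii) of Lemma~\ref{lem:contenv}. Since $\f$ takes values in $\R\cup\{-\infty\}$, it is everywhere $<+\infty$; as $\env_\theta(\f)\le\f$ and $\f$ is usc, passing to usc regularizations gives $\env_\theta(\f)^\star\le\f$, and in particular $\env_\theta(\f)^\star\not\equiv+\infty$. By Lemma~\ref{lem:contenv}(ii) the only remaining options are $\env_\theta(\f)\equiv-\infty$ or $\env_\theta(\f)^\star\in\PSH(\theta)$. In the latter case $\env_\theta(\f)^\star$ is $\theta$-psh and $\le\f$, hence a competitor in the supremum defining $\env_\theta(\f)$; this forces $\env_\theta(\f)^\star\le\env_\theta(\f)\le\env_\theta(\f)^\star$, so that $\env_\theta(\f)=\env_\theta(\f)^\star\in\PSH(\theta)$.

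For~(ii), monotonicity of the envelope operator shows that $(\env_\theta(\f_j))$ is a decreasing net bounded below pointwise by $\env_\theta(\f)$, so it suffices to prove $\psi:=\lim_j\env_\theta(\f_j)\le\env_\theta(\f)$. If $\env_\theta(\f_j)\equiv-\infty$ for some $j$, then $\env_\theta(\f)\le\env_\theta(\f_j)\equiv-\infty$ and we are done; so I assume each $\env_\theta(\f_j)\not\equiv-\infty$, whence each is $\theta$-psh by part~(i). Then $\psi$ is a decreasing limit of $\theta$-psh functions, and is therefore either $\equiv-\infty$ --- in which case $\env_\theta(\f)\le\psi\equiv-\infty$ gives the claim --- or $\theta$-psh. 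In the $\theta$-psh case I would use $\psi\le\env_\theta(\f_j)\le\f_j$ for all $j$ together with $\inf_j\f_j=\f$ to conclude $\psi\le\f$; thus $\psi\in\PSH(\theta)$ competes in the supremum defining $\env_\theta(\f)$, giving $\psi\le\env_\theta(\f)$ and hence equality.

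The only non-formal input is the stability of $\PSH(\theta)$ under decreasing limits (a decreasing net of $\theta$-psh functions is $\theta$-psh unless it is $\equiv-\infty$), which is already invoked in the proof of Lemma~\ref{lem:contenv}; everything else is bookkeeping of monotonicity and of the above trichotomy. I expect the main point to watch to be the careful treatment of the various $\equiv-\infty$ alternatives, rather than any genuine analytic difficulty.
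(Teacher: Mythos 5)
Your proposal is correct and follows essentially the same route as the paper: part~(i) via the trichotomy of Lemma~\ref{lem:contenv}(ii) combined with $\env_\theta(\f)^\star\le\f$ (using that $\f$ is usc), and part~(ii) via monotonicity of the envelope plus the stability of $\PSH(\theta)$ under decreasing nets (the paper cites~\cite[Theorem~4.7]{trivval} for this), with the limit shown to be a competitor in the definition of $\env_\theta(\f)$. Your more explicit bookkeeping of the $\equiv-\infty$ alternatives matches what the paper leaves implicit.
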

\begin{proof} By~Lemma~\ref{lem:contenv}, either $\p:=\env_ \theta(\f)^\star$ is $ \theta$-psh, or $\env_ \theta(\f)\equiv-\infty$ (the latter being automatic if $\PSH(\theta)=\emptyset$). Since $\env_ \theta(\f)\le\f$ and $\f$ is usc, we also have  $\p\le\f$. If $\p$ is $ \theta$-psh, then $\p\le\env_ \theta(\f)$, which proves (i). 

To see (ii), note that $\rho:=\lim_j\env_ \theta(\f_j)$ satisfies either $\rho\in\PSH(\theta)$ or $\rho\equiv-\infty$, by~\cite[Theorem~4.7]{trivval}. Furthermore, $\env_\theta(\f_j)\le\f_j$ yields, in the limit, $\rho\le\f$, and hence $\rho\le\env_\theta(\f)$ (by definition of $\env_\theta(\f)$ if $\rho\in\PSH(\theta)$, and trivially if $\rho\equiv-\infty$). Thus $\lim_j\env_\theta(\f_j)=\rho\le\env_\theta(\f)$. On the other hand, $\env_\theta(\f_j)\ge\env_\theta(\f)$ implies $\rho\ge\env_\theta(\f)$, which completes the proof of (ii). 
\end{proof}
%
%
\subsection{The Fubini--Study envelope}
Now consider a big $\Q$-line bundle $L$. Recall~\cite[\S2.4]{trivval} that for any subgroup $\Lambda\subset\R$, $\cH^\gf_\Lambda(L)$ denotes the set of functions $\f\colon X^\an\to\R$ of the form $$\f=m^{-1}\max_j\{\log|s_j|+\lambda_j\},$$ where $m\in\Z_{>0}$ is such that $mL$ is an honest line bundle, $(s_j)_j$ is a finite set of nonzero global sections of $mL$, and $\la_j\in\Lambda$.

We define the \emph{Fubini--Study envelope} of a bounded function $\f\colon X^\an\to\R$ as 
\begin{equation}\label{equ:FSenv}
\qq_L(\f):=\sup\left\{\p\in\cH^\gf_\R(L)\mid\p\le\f\right\}.
\end{equation}
By approximation, $\cH^\gf_\R(L)$ can be replaced by $\cH^\gf_\Q(L)=\cH^\gf_\Z(L)$ in this definition, see~\cite[(2.10)]{trivval}. Note also that $\qq_L(\f)\colon X^\an\to\R\cup\{-\infty\}$ is bounded above and lsc. 
 
Recall that the \emph{augmented base locus} of $L$ can be described as
$$
\B_+(L):=\bigcap\{\supp E\mid E\text{ effective }\Q\text{-Cartier divisor},\,L-E\text{ ample}\},
$$
a strict Zariski closed subset of $X$, see~\cite{ELMNP}. 

\begin{lem}\label{lem:FSenv} Suppose $\f\colon X^\an\to\R$ is bounded, with lsc regularization $\f_\star\colon X^\an\to\R$. Then $\qq_L(\f)=\qq_L(\f_\star)\le\env_L(\f_\star)$, and equality holds outside $\B_+(L)$. 
\end{lem}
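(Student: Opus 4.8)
The plan is to dispatch the first two relations directly and then to concentrate on the equality off $\B_+(L)$, which I reduce to the ample case.

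\emph{The relations $\qq_L(\f)=\qq_L(\f_\star)\le\env_L(\f_\star)$.} Every $\p\in\cH^\gf_\R(L)$ is continuous, hence lsc; so if $\p\le\f$ then $\p$ is an lsc minorant of $\f$ and therefore $\p\le\f_\star$. Thus the competitors in the definitions of $\qq_L(\f)$ and of $\qq_L(\f_\star)$ coincide, giving $\qq_L(\f)=\qq_L(\f_\star)$. Since moreover $\cH^\gf_\R(L)\subset\PSH(L)$, every competitor for $\qq_L(\f_\star)$ is a competitor for $\env_L(\f_\star)$, whence $\qq_L(\f_\star)\le\env_L(\f_\star)$. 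It remains to prove the reverse inequality at every $v\in\Xan$ lying over $X\setminus\B_+(L)$. Using the first relation we may assume $\f=\f_\star$ is lsc, and since both $\env_L$ and $\qq_L$ are continuous along increasing nets of lsc functions (by the argument of Lemma~\ref{lem:envlsc}), we may reduce further to $\f$ continuous when convenient.

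\emph{Reduction to an ample class.} Fix $v$ lying over $X\setminus\B_+(L)$. By definition of $\B_+(L)$ there is an effective $\Q$-Cartier divisor $E$ with $A:=L-E$ ample and $c_X(v)\notin\supp E$. Let $h:=\tfrac1m\log|s|\in\cH^\gf_\R(E)$ be the Fubini--Study function attached to a defining section $s$ of $mE$; then $h\le0$, and $h(v)=0$ because the center of $v$ avoids $\supp E$. Multiplying sections of $A$ by $s$, and adding the corresponding potentials, yields the two transfer relations $\cH^\gf_\R(A)+h\subset\cH^\gf_\R(L)$ and $\PSH(A)+h\subset\PSH(L)$. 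From the first, for any $\psi\in\cH^\gf_\R(A)$ with $\psi\le\f-h$ we get $\psi+h\in\cH^\gf_\R(L)$ with $\psi+h\le\f$, so $\psi(v)=\psi(v)+h(v)\le\qq_L(\f)(v)$; taking the supremum over $\psi$ gives $\qq_A(\f-h)(v)\le\qq_L(\f)(v)$. Because $A$ is ample it has the envelope property by~\cite[Theorem~5.20]{trivval} and $\B_+(A)=\emptyset$, so for the ample class $A$ the two envelopes of the lsc function $\f-h$ agree (reducing to continuous functions as above), i.e. $\env_A(\f-h)(v)=\qq_A(\f-h)(v)$.

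\emph{The crux.} Granting these, the proof closes provided
\begin{equation*}
\env_L(\f)(v)\le\env_A(\f-h)(v),
\end{equation*}
for then $\env_L(\f)(v)\le\env_A(\f-h)(v)=\qq_A(\f-h)(v)\le\qq_L(\f)(v)\le\env_L(\f)(v)$ forces equality throughout, which is the asserted equality off $\B_+(L)$. This last inequality is the heart of the matter and the step I expect to be hardest: it says that away from the augmented base locus an $L$-psh function is dominated by $A$-psh data. Heuristically $\p-h$ carries the local positivity of an $A$-psh function over $X\setminus\supp E$ — there $\om_A+\ddc(\p-h)=\om_L+\ddc\p\ge0$, since $[E]$ vanishes — but promoting this local statement to a global domination by a genuine element of $\PSH(A)$ lying below $\f-h$ is exactly where $\B_+(L)$ must be excluded. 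I would obtain it from the Fubini--Study approximation of $\PSH(L)$ from below away from $\B_+(L)$ available in~\cite{trivval}, first verifying the inequality at the dense set of divisorial points $v\in\Xdiv$ (where such approximation is exact and the reduction to $A$ is clean) and then propagating it to all of $\Xan\setminus\B_+(L)$ by semicontinuity together with the density of $\Xdiv$.
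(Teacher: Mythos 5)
Your first paragraph is fine and matches the paper's opening step, but the rest of the argument has two genuine gaps, and the overall strategy cannot prove the lemma in its stated generality. First, the reduction to the ample class $A$ hinges on the assertion that $\env_A(\f-h)(v)=\qq_A(\f-h)(v)$, which you justify by the envelope property of $A$ via~\cite[Theorem~5.20]{trivval}. That theorem carries hypotheses absent from the lemma: it requires $X$ smooth and $\charac k=0$ or $\dim X\le 2$, whereas the lemma is stated for an arbitrary projective variety over an arbitrary algebraically closed field. This is not a cosmetic loss: as the introduction of the paper points out, the envelope property can genuinely \emph{fail} (e.g.\ for non-unibranch $X$), yet the lemma still holds there, so no argument routed through the envelope property of $(X,A)$ can recover the full statement. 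Moreover, even where the envelope property holds, it only tells you that $\env_A$ of a continuous function is $A$-psh; it does not by itself identify the psh envelope with the Fubini--Study envelope $\qq_A$. That identification is exactly the ample case of the lemma being proved (it is the ``in particular'' following the statement), so your reduction is close to circular unless you supply an independent proof of the ample case --- which requires the approximation theorem~\cite[Theorem~4.15]{trivval}, never invoked in your proposal.

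Second, the inequality you correctly isolate as the crux, $\env_L(\f)(v)\le\env_A(\f-h)(v)$, is left as a plan rather than a proof, and the proposed propagation step would fail: checking the inequality at divisorial points and extending ``by semicontinuity and density of $\Xdiv$'' requires a comparison principle such as~\cite[Theorem~4.22]{trivval}, which applies only when \emph{both} functions are psh. For big, non-ample $L$ without the envelope property, $\env_L(\f)$ need not be psh (nor even usc), so there is no tool to propagate an inequality from $\Xdiv$ to $\Xan\setminus\B_+(L)$. The paper's proof avoids both problems by a direct argument valid in full generality: given any competitor $\p\in\PSH(L)$ with $\p\le\f$, it uses~\cite[Theorem~4.15]{trivval} to write $\p$ as the limit of a decreasing net $\p_j\in\cH^\gf(L+\e_j A)$, applies a Dini-type argument to get $\p_j<\f+\e$ for large $j$, and then ``tilts'' by the section cutting out $E$: the function $\tau_j:=(1+\e_j)^{-1}(\p_j+\e_j\log|s_E|)$ lies in $\cH^\gf_\Q(L)$ and satisfies $\tau_j\le\f+\e+C\e_j$, whence $\tau_j\le\qq_L(\f)+\e+C\e_j$; letting $j\to\infty$ and $\e\to 0$ gives $\p\le\qq_L(\f)$ on $(X\setminus E)^\an$, and varying $E$ yields the equality off $\B_+(L)$. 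Your heuristic about $\p-h$ carrying $A$-positivity away from $E$ is in the right spirit, but the rescaling trick (working with $(1+\e_j)L-\e_j E$ rather than $L-E$) is the missing idea that makes it rigorous without any envelope property.
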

In particular, $\qq_L(\f)=\env_L(\f_\star)$ when $L$ is ample. In this case, $\qq_L$ coincides with the envelope $\qq_{c_1(L)}$ in~\cite[\S5.3]{trivval}.
\begin{proof} Since any function $\p\in\cH^\gf(L)$ is continuous, it satisfies $\p\le\f$ iff $\p\le\f_\star$. Thus $\qq_L(\f)=\qq_L(\f_\star)$, and we may therefore assume wlog that $\f$ is lsc. Since $\cH^\gf(L)\subset\PSH(L)$, we trivially have $\qq_L(\f)\le\env_L(\f)$. Conversely, pick $\p\in\PSH(L)$ such that $\p\le\f$. Let $E$ be an effective $\Q$-Cartier divisor such that $A:=L-E$ is ample. By~\cite[Theorem~4.15]{trivval}, we can write $\p$ as the pointwise limit of a decreasing net $(\p_j)$ in $\cH^\gf(L+\e_j A)$ with $\e_j\to 0$. Pick $\e>0$, so that $\p<\f+\e$. As in the proof of~Lemma~\ref{lem:envlsc}, since $\p_j$ is usc and $\f$ is lsc, a simple variant of Dini's lemma shows that $\p_j<\f+\e$ for all $j$ large enough.
  
  Set $\log|s_E|:=m^{-1}\log|s_{mE}|$, where $s_{mE}$ is the canonical global section of $\cO_X(mE)$ for any $m\ge1$ such that $mE$ is integral.
Then $\log|s_E|\le 0$ lies in $\cH^\gf(E)$, so it follows that $\tau_j:=(1+\e_j)^{-1}(\p_j+\e_j\log|s_E|)$ lies in $\cH^\gf(L)$. Further, 
$$
\tau_j\le(1+\e_j)^{-1}(\f+\e)\le\f+\e+C\e_j
$$
for some uniform $C>0$, since $\f$ is bounded, and hence 
$$
\tau_j\le\qq_L(\f+\e+C\e_j)=\qq_L(\f)+\e+C\e_j.
$$
We have thus proved
$\p_j+\e_j\log|s_E|\le(1+\e_j)(\qq_L(\f)+\e+C\e_j)$; at any point of 
$$
(X-E)^\an=\{\log|s_E|>-\infty\},
$$
this yields $\p\le\qq_L(\f)$, and hence $\env_L(\f)\le\qq_L(\f)$, which proves the result. 
\end{proof}
%
%
\subsection{Envelopes from test configurations}
Let $L$ be a big line bundle. Any test configuration $(\cX,\cL)$ for $(X,L)$  defines a function $\f_\cL\in\PL$, and we seek to compute the Fubini--Study envelope $\qq_L(\f_\cL)$.

To this end, we introduce a slight generalization of the definitions in~\cite[\S2.1]{trivval}. To any $\Gm$-invariant ideal $\fa\subset\cO_\cX$, we attach a function $\f_\fa\colon X^\an\to [-\infty,0]$ by setting $\f_{\fa}(v):=-\sigma(v)(\fa)$, where $\sigma=\sigma_\cX$ denotes Gauss extension (see~\cite[Remark~1.9]{trivval}). In terms of the weight decomposition $\fa=\sum_{\la\in\Z_{\ge 0}}\fa_\la\unipar^{-\la}$ with $\fa_\la\subset\cO_X$, we have $\f_\fa=\max_\la\{\log|\fa_\la|+\la\}$. If $\cL$ is an honest line bundle such that $\cL\otimes\fa$ is globally generated, one easily checks as in~\cite[Proposition~2.25]{trivval} that $\f_\cL+\f_\fa$ lies in $\cH^\gf_\Q(L)$.

\begin{lem}\label{lem:envline} Let $L$ be a big line bundle on $X$, and $(\cX,\cL)$ an integrally closed test configuration for $(X,L)$. For each sufficiently divisible $m\in\Z_{>0}$, denote by $\fa_m\subset\cO_\cX$ the base ideal of $m\cL$, and set $\f_m:=\f_\cL+m^{-1}\f_{\fa_m}$. Then $\f_m\in\cH^\gf_\Q(L)$ and $(\f_m)_m$ forms an increasing net of functions on $X^\an$ converging pointwise to $\qq_L(\f_\cL)$.
\end{lem}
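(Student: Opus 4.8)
The plan is to check three things in turn: that each $\f_m$ lies in $\cH^\gf_\Q(L)$, that the net is increasing, and that its pointwise limit equals $\qq_L(\f_\cL)$; only the last carries real content. For membership, note that for $m$ sufficiently divisible $m\cL$ is an honest line bundle and, by definition of the base ideal, $m\cL\otimes\fa_m$ is globally generated. The discussion preceding the lemma (cf.~\cite[Proposition~2.25]{trivval}) then gives $m\f_\cL+\f_{\fa_m}=\f_{m\cL}+\f_{\fa_m}\in\cH^\gf_\Q(mL)$, and dividing by $m$ and using homogeneity of $\cH^\gf_\Q$ yields $\f_m=\f_\cL+m^{-1}\f_{\fa_m}\in\cH^\gf_\Q(L)$. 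Since $\fa_m\subseteq\cO_\cX$ we have $\f_{\fa_m}\le 0$, so $\f_m\le\f_\cL$; as $\f_m\in\cH^\gf_\Q(L)\subseteq\cH^\gf_\R(L)$ is then a competitor in the definition of $\qq_L(\f_\cL)$, we obtain $\f_m\le\qq_L(\f_\cL)$ and hence $\sup_m\f_m\le\qq_L(\f_\cL)$.

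For monotonicity I would use that products of $k$ sections of $m\cL$ are sections of $km\cL$, which gives the ideal inclusion $\fa_m^k\subseteq\fa_{km}$. Translating to functions — smaller ideals give smaller $\f_\fa$, and $\f_{\fa^k}=k\f_\fa$ — this reads $k\f_{\fa_m}\le\f_{\fa_{km}}$, whence $\f_{km}=\f_\cL+(km)^{-1}\f_{\fa_{km}}\ge\f_\cL+m^{-1}\f_{\fa_m}=\f_m$. Thus $(\f_m)_m$ is increasing along the divisibility order, and its pointwise limit equals $\sup_m\f_m$.

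The heart of the matter is the reverse inequality $\qq_L(\f_\cL)\le\sup_m\f_m$. Using $\cH^\gf_\R(L)=\cH^\gf_\Q(L)=\cH^\gf_\Z(L)$ in the definition of $\qq_L$ (see~\cite[(2.10)]{trivval}), it suffices to show that every $\psi=d^{-1}\max_j\{\log|s_j|+\la_j\}$, with $s_j\in H^0(X,dL)$, $\la_j\in\Z$ and $\psi\le\f_\cL$, satisfies $\psi\le\sup_m\f_m$. The crucial input is the dictionary, coming from the Gauss-extension description of $\f_\cL$ in~\cite[\S2.1]{trivval}, that the pointwise bound $\log|s_j|+\la_j\le d\f_\cL$ on $X^\an$ is equivalent to the weight-shifted section $\unipar^{-\la_j}s_j$ extending to a global section of $d\cL$ over $\cX$. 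Granting this, $\unipar^{-\la_j}s_j\in H^0(\cX,d\cL)$, so $\log|s_j|+\la_j=\log|\unipar^{-\la_j}s_j|\le d\f_\cL+\f_{\fa_d}=d\f_d$, the middle inequality holding because $d\f_\cL+\f_{\fa_d}$ is the Fubini--Study function of the full system $|d\cL|$. Taking the maximum over $j$ and dividing by $d$ gives $\psi\le\f_d\le\sup_m\f_m$, and the supremum over $\psi$ finishes the proof.

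The main obstacle is exactly this dictionary: one must convert a pointwise inequality between $\PL$ functions on $X^\an$ into an integrality statement about rational sections on the model $\cX$. I would reduce the inequality to divisorial points $v\in\Xdiv$, which are dense (as already exploited in Lemma~\ref{lem:contenv}), apply the Gauss extension $\sigma(v)$, and invoke the valuative criterion for a rational section of $d\cL$ to be regular. The delicate accounting is to match the weight $\la_j$ with the $\Gm$-weight of $\unipar^{-\la_j}s_j$ against the sign conventions defining $\f_\cL$ and $\f_{\fa_d}$, and to confirm that it is precisely the integrally closed hypothesis on $(\cX,\cL)$ that makes $\f_\cL$ computable in this way.
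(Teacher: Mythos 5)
Your proposal is correct and takes essentially the same route as the paper: membership and the upper bound via global generation of $m\cL\otimes\fa_m$, monotonicity via $\fa_m^k\subseteq\fa_{km}$, and the reverse inequality via the section-extension dictionary, which is precisely the paper's Lemma~\ref{lem:fcL}, stated and proved immediately before this lemma. The paper proves that dictionary much as you sketch (Gauss extension computation, with integral closedness used to reduce to a model dominating $\cX_\triv$ so that $\f_\cL=\f_D$ for a vertical divisor $D$), except that the converse direction does not rest on density of $\Xdiv$ but on evaluating at the finitely many divisorial valuations $b_E^{-1}\ord_E$ attached to the irreducible components $E$ of $\cX_0$ and then invoking the valuative criterion of~\cite[Lemma~1.23]{trivval}.
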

Here we consider $(\f_m)_m$ as a net indexed by the set $m_0\Z_{>0}$ for some sufficiently divisible $m_0$, and partially ordered by divisibility.

To prove the lemma, recall~\cite[\S1.2]{trivval}  that if $\cL$ (and hence $L$) is an honest line bundle, then $\Hnot(\cX,\cL)$ lies as a $k[\unipar]$-submodule of $\Hnot(X,L)_{k[\unipar^{\pm1}]}$. The next result provides a valuative characterization of this submodule in terms of~$\f_\cL$. 

\begin{lem}\label{lem:fcL} Assume $\cL$ is an honest line bundle, pick $s\in\Hnot(X,L)_{k[\unipar^{-\pm1}]}$, and write $s=\sum_{\la\in\Z} s_\la\unipar^{-\la}$ with $s_\la\in\Hnot(X,L)$. Then $s\in\Hnot(\cX,\cL)$ iff $\max_\la\{\log|s_\la|+\la\}\le\f_\cL$ on $X^\an$. 
\end{lem}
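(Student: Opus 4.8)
The plan is to test both conditions against divisorial valuations and to bridge them through the Gauss extension. The starting observation is the formula
\[
\sigma(v)(s)=\min_\la\{v(s_\la)-\la\},
\]
valid for any $v\in\Xan$ once we trivialize $L$ locally at the center of $\sigma(v)$: indeed $\sigma(v)(\unipar^{-\la})=-\la$ and $\sigma(v)$ restricts to $v$ on $k(X)$, exactly as in the computation giving $\f_\fa=-\sigma(v)(\fa)$. Since $\log|s_\la|=-v(s_\la)$, this rewrites the desired inequality $\max_\la\{\log|s_\la|+\la\}\le\f_\cL$ at $v$ as $\sigma(v)(s)+\f_\cL(v)\ge0$. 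The lemma thus amounts to showing that $s\in\Hnot(\cX,\cL)$ iff $\sigma(v)(s)+\f_\cL(v)\ge0$ for every $v\in\Xan$.

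First I would reduce membership to a divisorial statement using normality. Since $(\cX,\cL)$ is integrally closed, $\cX$ is normal, so for any $\Gm$-equivariant proper birational $\mu\colon\cX'\to\cX$ the projection formula gives $\Hnot(\cX',\mu^\star\cL)=\Hnot(\cX,\cL)$, while plainly $\f_{\mu^\star\cL}=\f_\cL$. Hence $s\in\Hnot(\cX,\cL)$ iff $s$ extends to a section over every such model $\cX'$, and by normality this is detected in codimension one: $\ord_{E'}(s)$ must meet the bound imposed by $\mu^\star\cL$ along every prime divisor $E'\subset\cX'$. The horizontal divisors impose no condition, because $\cL$ restricts to $L$ on the generic fibre and each $s_\la$ is an honest global section of $L$; only the vertical divisors matter.

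The key computation is then that the vertical divisors, ranging over all models $\cX'\to\cX$, correspond exactly to the divisorial valuations $v\in X^{\div}$ via the Gauss extension: the valuation $\sigma(v)$ is, up to the normalization $\sigma(v)(\unipar)=1$, the monomial valuation cut out by the vertical divisor $E'_v$, and the order condition $\ord_{E'_v}(s)\ge(\text{bound from }\mu^\star\cL)$ unwinds precisely to $\sigma(v)(s)+\f_\cL(v)\ge0$. One fixes the normalization and the additive constant by checking on the trivial test configuration $X\times\A^1$, where $\f_\cL\equiv0$ and the displayed inequality correctly selects the sections polynomial in $\unipar$. This identifies $s\in\Hnot(\cX,\cL)$ with the validity of $\sigma(v)(s)+\f_\cL(v)\ge0$ for all $v\in X^{\div}$.

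Finally I would upgrade from $X^{\div}$ to all of $\Xan$. Both sides are continuous: $\f_\cL\in\PL(X)$ is continuous, and $v\mapsto\sigma(v)(s)=\min_\la\{v(s_\la)-\la\}$ is continuous into $(-\infty,+\infty]$ since each $v\mapsto v(s_\la)$ is continuous into $[0,+\infty]$ for the Berkovich topology. Thus $\{\,v:\sigma(v)(s)+\f_\cL(v)\ge0\,\}$ is closed, and containing the dense set $X^{\div}$ it is all of $\Xan$. The main obstacle is the bookkeeping in the key computation: matching the valuation-theoretic normalization $b=\ord_{E'}(\unipar)$ of a vertical divisor with the normalization $\sigma(v)(\unipar)=1$, and verifying that the $\cL$-twist along $E'$ is recorded by $\f_\cL(v)$ with the correct sign and scaling—this is where the precise definitions of the Gauss extension and of $\f_\cL$ from~\cite{trivval} must be invoked, and where one must be careful that the horizontal contributions genuinely drop out.
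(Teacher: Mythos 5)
Your overall strategy coincides with the paper's: translate membership in $\Hnot(\cX,\cL)$ into a valuative inequality via the Gauss extension, get one implication from the fact that a regular section has non-negative value under every $\sigma(v)$, and get the converse by detecting regularity in codimension one along the central fiber. But there are two genuine gaps as written. First, the claim that ``$(\cX,\cL)$ integrally closed implies $\cX$ normal'' is false in the generality of this paper: $X$ is allowed to be non-normal (Theorem~B is precisely aimed at such $X$, e.g.\ non-unibranch ones), and since $\cX$ contains $X\times\Gm$ as a dense open subset, $\cX$ is then never normal. On a non-normal scheme, absence of poles along all prime divisors does not imply regularity, so your codimension-one detection step fails as justified. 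What rescues it---and what the paper invokes---is the valuative characterization of integral closedness itself (\cite[Lemma~1.23]{trivval}): a section regular off $\cX_0$ with $\ord_E(s)\ge 0$ for every irreducible component $E$ of $\cX_0$ lies in $\Hnot(\cX,\cL)$. The same goes for your projection-formula step: $\Hnot(\cX',\rho^\star\cL)=\Hnot(\cX,\cL)$ follows from $\rho_\star\cO_{\cX'}=\cO_{\cX}$, which is again a consequence of integral closedness (cf.\ the proof of \cite[Proposition~2.30]{trivval}), not of normality.

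Second, the ``key computation''---that the regularity condition along a vertical divisor unwinds to $\sigma(v)(s)+\f_\cL(v)\ge 0$ with the correct sign and scaling---is the entire content of the lemma, and you defer it; moreover, your proposed calibration cannot pin it down. You suggest fixing the normalization and additive constant ``by checking on the trivial test configuration'', but on $\cX_\triv$ the twist $D:=\cL-\mu^\star\cL_\triv$ (equivalently $\f_\cL$) vanishes identically, so this check is blind to any error---in sign, scaling, or otherwise---in exactly the term that carries the content. The paper instead first reduces to $s=s_\la\unipar^{-\la}$ by $\Gm$-invariance (which also removes your $\min_\la$ bookkeeping), pulls $\cL$ back to a model dominating $\cX_\triv$ (legitimate by the integral-closedness argument above), and computes directly: for $w=\sigma(v)$ one has $w(s)=v(s_\la)-\la+w(D)=-\log|s_\la|(v)-\la+\f_\cL(v)$. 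With this formula in hand, your detour through all models and the density of $X^\div$ becomes unnecessary: the forward implication holds at every $v\in X^\an$ outright, and the converse needs only the finitely many valuations $\sigma(v_E)=b_E^{-1}\ord_E$ attached to the irreducible components $E$ of $\cX_0$.
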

\begin{proof} By $\Gm$-invariance, we have $s\in\Hnot(\cX,\cL)\Leftrightarrow s_\la\unipar^{-\la}\in\Hnot(\cX,\cL)$ for all $\la\in\Z$, and we may thus assume $s=s_\la\unipar^{-\la}$ for some $\la\in\Z$. 

Since $\cX$ is integrally closed, we have $\rho_\star\cO_{\cX'}=\cO_{\cX}$, and hence $\Hnot(\cX',\rho^\star\cL)=\Hnot(\cX,\cL)$, for any higher test configuration $\rho\colon\cX'\to\cX$ (see the proof of~\cite[Proposition~2.30]{trivval}). After pulling back $\cL$ to a higher test configuration, we may thus assume that $\cX$ dominates the trivial test configuration via $\mu\colon\cX\to\cX_\triv$. Set $D:= \cL-\mu^\star\cL_\triv$, so that $\f_\cL=\f_D$. Viewed as a rational section of $\cL$, $s$ is regular outside $\cX_0$. For any $v\in X^\an$ with Gauss extension $w=\sigma(v)$, we further have 
$$
w(s)=v(s_\la)-\la+w(D)=-\log|s_\la|(v)-\la+\f_D(v).
$$
If $s$ is a regular section, then $w(s)\ge 0$, and hence $\log|s_\la|(v)+\la\le\f_D(v)$ for any $v\in X^\an$. Conversely, the latter condition implies $b_E^{-1}\ord_E(s)=-\log|s_\la|(v_E)-\la+\f_D(v_E)\ge 0$ for each irreducible component $E$ of $\cX_0$, since $\sigma(v_E)=b_E^{-1}\ord_E$; this yields, as desired, $s\in\Hnot(\cX,\cL)$ (compare~\cite[Lemma~1.23]{trivval}).
\end{proof}

\begin{proof}[Proof of Lemma~\ref{lem:envline}] Replacing $L$ and $\cL$ by sufficiently divisible multiples, we may assume that $L$ and $\cL$ are honest line bundles.

  We have $\fa_m\cdot\fa_{m'}\subset\fa_{m+m'}$ for all $m,m'\in\N$. This implies that the net $(\f_m)_m$ is increasing.

   By definition of $\fa_m$, $m\cL\otimes\fa_m$ is globally generated. As noted above, this implies $\f_{m\cL}+\f_{\fa_m}\in\cH^\gf_\Q(mL)$, and hence $\f_m\in\cH^\gf_\Q(L)$. Since $\f_{\fa_m}\le 0$, we further have $\f_m\le\f_\cL$, and hence $\f_m\le\qq_L(\f_\cL)$, see~\eqref{equ:FSenv}.
 
Conversely, pick $\p\in\cH^\gf_\Q(L)$ such that $\p\le\f_\cL$, and write $\p=\frac 1m\max_i\{\log|s_i|+\la_i\}$ for a finite set of nonzero sections $s_i\in\Hnot(X,mL)$ and $\la_i\in\Z$. For each $i$, we then have $\log|s_i|+\la_i\le m\f_{\cL}=\f_{m\cL}$, and hence $s_i\unipar^{-\la_i}\in\Hnot(\cX,m\cL)$, see Lemma~\ref{lem:fcL}. Since $\fa_m$ is locally generated by $\Hnot(\cX,m\cL)$, this implies in turn $\log|s_i|+\la_i\le \f_{m\cL}+\f_{\fa_m}$, and hence $\p\le\f_m$. Taking the supremum over $\p$, we conclude, as desired, $\qq_L(\f_\cL)\le\sup_m\f_m$. 
\end{proof}
%
%
\subsection{The energy pairing}\label{sec:enpair}
Various incarnations of the energy pairing play a key role in~\cite{trivval}. First of all, when $\theta_0,\dots,\theta_n\in\Num(X)$ are arbitrary numerical classes and $\f_0,\dots,\f_n\in\PL(X)_\R$ are ($\R$-linear combinations of) PL functions, then
\begin{equation*}
  (\theta_0,\f_0)\cdot\ldots\cdot(\theta_n,\f_n)\in\R
\end{equation*}
is defined as an intersection number on a compactified test configuration for $X$, see~\cite[\S3.2]{trivval}. 
The following result would naturally belong to~\cite[Proposition~3.14]{trivval}.
\begin{lem}\label{lem:PL}
  Let $\pi\colon Y\to X$ be a projective birational morphism, $\theta_0,\dots,\theta_n\in\Num(X)$ numerical classes, and $\f_0,\dots,\f_n\in\PL(X)$ PL functions. Then
  \begin{equation*}
    (\theta_0,\f_0)\cdot\ldots\cdot(\theta_n,\f_n)
    =(\pi^\star\theta_0,\pi^\star\f_0)\cdot\ldots\cdot(\pi^\star\theta_n,\pi^\star\f_n).
  \end{equation*}
\end{lem}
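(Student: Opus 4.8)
The plan is to reduce both sides to a single intersection number on one common compactified test configuration, and then invoke birational invariance of top intersection numbers. Recall from \cite[\S3.2]{trivval} that the left-hand pairing is computed by choosing a \emph{determination}: a normal, $\Gm$-equivariant test configuration for $X$, compactified over $\P^1$, dominating the trivial one, with structure morphism $p\colon\cX\to X$ (the composite $\cX\to X\times\P^1\to X$), on which each $\f_i$ is the model function $\f_{D_i}$ of a vertical $\Q$-Cartier divisor $D_i$ supported on the central fiber. With this choice,
\begin{equation*}
(\theta_0,\f_0)\inter(\theta_n,\f_n)=(p^\star\theta_0+D_0)\inter(p^\star\theta_n+D_n),
\end{equation*}
an intersection number of the $n+1$ classes $p^\star\theta_i+D_i$ on the $(n+1)$-dimensional variety $\cX$.

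First I would construct a compatible determination for the right-hand side. Since $\pi$ is birational, the function field is unchanged, and I would let $\cY$ be the normalization of the main component of $\cX\times_X Y$ (equivalently, the normalized graph of the induced birational map). It carries a $\Gm$-action and a proper birational morphism $\Pi\colon\cY\to\cX$ extending $\pi$ on the generic fiber, as well as morphisms $q\colon\cY\to Y$ and $\cY\to\P^1$; these assemble into a morphism $\cY\to Y\times\P^1$, which is birational, so that $\cY$ is a compactified test configuration for $Y$ dominating the trivial one. By construction $p\circ\Pi=\pi\circ q$.

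Next I would check that the data computing the right-hand pairing is precisely the $\Pi$-pullback of the data above. On the one hand, $q^\star\pi^\star\theta_i=\Pi^\star p^\star\theta_i$, because $\pi\circ q=p\circ\Pi$. On the other hand, the model function of a vertical divisor depends only on the divisor together with the map to $\P^1$, so its pullback along the morphism of compactified test configurations $\Pi$ is the model function of the pulled-back divisor; thus $\pi^\star\f_i=\pi^\star\f_{D_i}=\f_{\Pi^\star D_i}$. Hence $\cY$ is a valid determination for the right-hand side, and
\begin{equation*}
(\pi^\star\theta_0,\pi^\star\f_0)\inter(\pi^\star\theta_n,\pi^\star\f_n)=\bigl(\Pi^\star(p^\star\theta_0+D_0)\bigr)\inter\bigl(\Pi^\star(p^\star\theta_n+D_n)\bigr).
\end{equation*}
Finally I would conclude by the projection formula: for the proper morphism $\Pi\colon\cY\to\cX$ between $(n+1)$-dimensional varieties one has $\deg\bigl(\Pi^\star\alpha_0\inter\Pi^\star\alpha_n\bigr)=(\deg\Pi)\,\deg(\alpha_0\inter\alpha_n)$, and $\deg\Pi=1$ since $\Pi$ is birational; comparing the two displays then gives the asserted equality.

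The main obstacle is the identification in the third step, namely verifying that the pulled-back test configuration $\cY$ is genuinely a determination for the right-hand pairing: that it is $\Gm$-equivariant, dominates the trivial test configuration $Y\times\P^1$, and that the model functions and numerical classes transform as claimed under $\Pi$ (in particular the functoriality $\pi^\star\f_{D_i}=\f_{\Pi^\star D_i}$). Once this compatibility of determinations is in place, the conclusion is immediate from the degree-one projection formula, and is in fact the same kind of verification already carried out for the intersection-theoretic pairing in \cite[Proposition~3.14]{trivval}.
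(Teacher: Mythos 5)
Your proposal is correct and follows essentially the same route as the paper's proof: choose a determination $(\cX, D_i)$ for the left-hand side, construct a compactified test configuration $\cY$ for $Y$ with a $\Gm$-equivariant birational morphism to $\cX$ extending $\pi$ (the paper obtains $\cY$ by resolving the birational map $Y\times\A^1\dashrightarrow\cX$, you by normalizing the main component of $\cX\times_X Y$ — the same object), use functoriality of model functions and pullback classes, and conclude by the projection formula for the degree-one morphism $\Pi$. The only difference is that your construction of $\cY$ is more explicit, which is a matter of presentation rather than substance.
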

\begin{rmk}
  While we are assuming that $X$ and $Y$ are irreducible, the result holds even without this assumption, as in~\cite[Proposition~3.14]{trivval}.
\end{rmk}
\begin{proof}
  There exists a test configuration $\cX$ for $X$ that dominates $\cX_\triv=X\times\A^1$,  and vertical $\Q$-Cartier divisor $D_i\in\VCar(\cX)_\Q$ that determine the functions $\f_i$, 
  $0\le i\le n$. Then
  \[
    (\theta_0,\f_0)\cdot\ldots\cdot(\theta_n,\f_n)
    =(\theta_{0,\bar\cX}+D_0)\cdot\ldots\cdot(\theta_{n,\bar\cX}+D_n),
  \]
  where the intersection number is computed on the canonical compactification $\bar\cX\to\P^1$ and $\theta_{i,\cX}\in\Num(\bar\cX)$ denotes the pullback of $\theta_i$. The canonical birational map $\cY_\triv=Y\times\A^1\dashrightarrow\cX$ being $\Gm$-equivariant, we can choose a test configuration $\cY$ for $Y$ that dominates $\cY_\triv$ such that $\pi\colon Y\to X$ extends to a $\Gm$-equivariant morphism $\pi\colon \cY\to\cX$. Then $\pi^\star\f_{D_i}=\f_{\pi^\star D_i}$ for all $i$, and we have
  \begin{multline*}
    (\pi^\star\theta_0,\pi^\star\f_0)\cdot\ldots\cdot(\pi^\star\theta_n,\pi^\star\f_n)
    =(\pi^\star\theta_{0,\bar\cX}+\pi^\star D_0)\cdot\ldots\cdot(\pi^\star\theta_{n,\bar\cX}+\pi^\star D_n)\\
    =(\theta_{0,\bar\cX}+D_0)\cdot\ldots\cdot(\theta_{n,\bar\cX}+D_n)
    =(\theta_0,\f_0)\cdot\ldots\cdot(\theta_n,\f_n),
\end{multline*}
where the second equality follows from the projection formula.
\end{proof}
In~\cite[\S7]{trivval}, the energy pairing was extended in various ways. First, one can define
\begin{equation*}
  (\om_0,\f_0)\cdot\ldots\cdot(\om_n,\f_n)\in\R\cup\{-\infty\}
\end{equation*}
for $\om_i\in\Amp(X)$ and $\f_i\in\PSH(\om_i)$ by approximation from above by functions in $\PSH(\om_i)\cap\PL(X)$. Given $\om\in\Amp(X)$, a function $\f\in\PSH(\om)$ has \emph{finite energy} if $(\om,\f)^{n+1}>-\infty$, and the set of such functions is denoted by $\cE^1(\om)$. If $\f\in\PSH(\om)$, we set
\begin{equation*}
  \en_\om(\f):=\frac{(\om,\f)^{n+1}}{(n+1)(\om^n)}.
\end{equation*}
The functional $\en_\om$ is increasing and satisfies $\en_\om(\f+c)=\en_\om(\f)+c$ for any $\f\in\PSH(\om)$ and $c\in\R$. 
We have $(\om_0,\f_0)\cdot\ldots\cdot(\om_n,\f_n)>-\infty$ for any $\om_i\in\Amp(X)$ and $\f_i\in\cE^1(\om_i)$. 

For a general bounded-above function $\f\colon X^\an\to\R\cup\{-\infty\}$ we set
\[
  \en_\om(\f):=\sup\{\en_\om(\p)\mid\p\in\PSH(\om), \p\le\f\}.
\]
Then $\en_\om(\f)=\en_\om(\env_\om(\f))$ for any bounded-above function $\f$.

A function $\f\colon X^\lin\to\R$ is said to be of finite energy if it is of the form $\f=\f^+-\f^-$, where $\f^\pm\in\cE^1(\om)$ for some $\om\in\Amp(X)$. The energy pairing then extends as a (finite) multilinear pairing $(\theta_0,\f_0)\cdot\ldots\cdot(\theta_n,\f_n)$ for arbitrary numerical classes $\theta_i\in\Num(X)$ and functions $\f_i$ of finite energy.
%
%
%
%
\section{Theorem~A}
We now prove Theorem~A and derive some consequences.
%
%
\subsection{Proof of Theorem~A}
  The result is trivial if $\theta$ is not pseudoeffective, as $\PSH(\theta)$ is then empty. Otherwise, we can write $\theta=\lim_i c_1(L_i)$ for a sequence of big $\Q$-line bundles $L_i$ with $c_1(L_i)\ge\theta$; by~\cite[Lemma~5.9]{trivval}, we may thus assume that $\theta=c_1(L)$ for a big $\Q$-line bundle $L$. Pick $\f\in\PL(X)$. By~Lemma~1.3, we need to show that $\env_L(\f)$ is $L$-psh. By~\cite[Theorem~2.31]{trivval}, we have $\f=\f_\cL$ for some integrally closed test configuration $(\cX,\cL)$ for $(X,L)$. After replacing $L$ with a multiple, we may further assume that $L$ and $\cL$ are honest line bundles.  

Since we assume that $\charac k=0$ or $\dim X\le 2$ (and hence $\dim\cX\le 3$), we can rely on resolution of singularities and assume that $\cX$ is smooth and $\cX_0$ has simple normal crossings support. Assume first that $\charac k=0$, and let $\fb_m$ be the multiplier ideal of the graded sequence $\fa_\bullet^m$, see Lemma~\ref{lem:envline}. The inclusion $\fa_m\subset\fb_m$ is elementary, 
  and we have $\fb_{ml}\subset\fb_m^l$ for all $m,l$ by the subadditivity property of 
  multiplier ideals. This implies that 
  \begin{equation*}
 (ml)^{-1}\f_{\fa_{ml}}\le (ml)^{-1}\f_{\fb_{ml}}\le m^{-1}\f_{\fb_m}
  \end{equation*}
  for all $m$ and $l$. Letting $l\to\infty$ shows that
\begin{equation}\label{equ:squeeze}
  \f_m\le\qq_L(\f_\cL)\le\p_m:=\f_\cL+m^{-1}\f_{\fb_m}
  \end{equation}    
  for all  $m$, by Lemma~\ref{lem:envline}. By the uniform global generation property of multiplier ideals, we can find a $\Gm$-equivariant ample line bundle $\cA$ on $\cX$ such that $\cO_\cX(m\cL+\cA)\otimes\fb_m$ is  globally generated for all $m$. As noted before Lemma~\ref{lem:envline}, this implies $\f_{m\cL+\cA}+\f_{\fb_m}\in\cH^\gf(mL+A)$, with $A\in\Pic(X)$ the restriction of $\cA$, and hence 
$$
\p'_m:=\p_m+\tfrac 1m\f_\cA\in\cH^\gf_\Q(L+\tfrac 1m A).
$$
After adding to $\cA$ a multiple of $\cX_0$, we may further assume $\f_\cA\ge 0$, which guarantees that the net $(\p'_m)$ is decreasing with respect to the divisibility order, and hence that $\p:=\inf_m \p'_m$ is either $L$-psh or identically $-\infty$ (see~\cite[Theorem~4.5]{trivval}). By~\eqref{equ:squeeze}, we have 
$$
\qq_L(\f_\cL)\le\p'_m\le\f_\cL+\tfrac 1m\f_\cA,
$$
and hence $\qq_L(\f_\cL)\le\p\le\f_\cL$. In particular, $\p\not\equiv-\infty$, so $\p\in\PSH(L)$, and hence $\p\le\env_L(\f_\cL)$. Finally, pick $\tau\in\PSH(L)$ such that $\tau\le\f_\cL$. By Lemma~\ref{lem:FSenv}, we have $\tau\le\env_L(\f_\cL)=\qq_L(\f_\cL)\le\p$ on a Zariski open subset of $X^\an$, and hence on $X^\div$. Since $\tau$ and $\p$ are $L$-psh, it follows from~\cite[Theorem~4.22]{trivval} that $\tau\le\p$ on $X^\an$. Taking the sup over $\tau$ yields $\env_L(\f_\cL)\le\p$, and we conclude, as desired, that $\env_L(\f_\cL)=\p$ is $L$-psh. 

When $\charac k>0$, the very same argument applies with test ideals in place of multiplier ideals, see~\cite{GJKM19} for details. 
%
%
\subsection{Consequences}
We now list some consequences of Theorem~A. First, we can characterize psef classes, similarly to the complex analytic case.
\begin{cor}
  Assume that $X$ satisfies the assumptions in Theorem~A. Then, for any $\theta\in\Num(X)$,  we have $\PSH(\theta)\ne\emptyset$ iff $\theta$ is psef. Moreover, in this case, the function $$V_\theta:=\pp_\theta(0)$$ is $\theta$-psh.
\end{cor}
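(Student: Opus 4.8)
The implication $\PSH(\theta)\ne\emptyset\Rightarrow\theta$ psef was already recorded in~\S1.1, so the substance is the converse together with the final assertion. The plan is to realize $V_\theta=\pp_\theta(0)=\env_\theta(0)$ as a \emph{decreasing limit of envelopes for big classes}, to which Theorem~A applies. Fix an ample class $\om\in\Num(X)$ and set $\theta_i:=\theta+i^{-1}\om$. Since $\theta$ is psef, each $\theta_i$ is big and $(\theta_i)_{i\ge1}$ decreases to $\theta$, with $\theta_i-\theta$ ample and hence nef, so that $\PSH(\theta)\subset\PSH(\theta_i)$ and $\env_\theta(0)\le\env_{\theta_i}(0)$. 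By Theorem~A each $\theta_i$ has the envelope property; since $\theta_i$ is big we have $\PSH(\theta_i)\ne\emptyset$, so $\env_{\theta_i}(0)\not\equiv-\infty$ (shift a member of $\PSH(\theta_i)$ below $0$, using invariance under additive constants), and Corollary~\ref{cor:envusc}(i) applied to the usc function $0$ gives that $V_i:=\env_{\theta_i}(0)$ is $\theta_i$-psh. As the classes decrease, $(V_i)_i$ is a decreasing sequence of functions bounded above by $0$.

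The key point is that this sequence does not degenerate to $-\infty$, which I read off at the trivial valuation $\xtriv$. Choose a big $\Q$-line bundle $L$ with $\theta_i-c_1(L)$ ample (possible since $\theta_i$ is big, so such rational classes fill a nonempty open cone); then $\cH^\gf(L)\ne\emptyset$ and $\cH^\gf(L)\subset\PSH(L)\subset\PSH(\theta_i)$. Any $\f\in\cH^\gf(L)$ has the form $\f=m^{-1}\max_j\{\log|s_j|+\la_j\}$, and since $\log|s|\le 0$ on $\Xan$ with equality at $\xtriv$ for every nonzero section $s$, we get $\sup_{\Xan}\f=\f(\xtriv)=m^{-1}\max_j\la_j$. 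Hence $\f-\sup_{\Xan}\f\in\PSH(\theta_i)$ is $\le 0$ and vanishes at $\xtriv$, giving $V_i(\xtriv)\ge 0$; as $V_i\le 0$ this forces $V_i(\xtriv)=0$ for every $i$. Therefore $V:=\inf_i V_i=\lim_i V_i$ satisfies $V(\xtriv)=0$.

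Now apply the decreasing-limit theorem~\cite[Theorem~4.5]{trivval} to the decreasing sequence $(V_i)$ with $\theta_i\searrow\theta$: the limit $V$ is either $\theta$-psh or $\equiv-\infty$, and the previous paragraph rules out the latter. Thus $V\in\PSH(\theta)$, proving $\PSH(\theta)\ne\emptyset$. Finally, $V\le 0$ and $V\in\PSH(\theta)$ show $V\le\env_\theta(0)=V_\theta$, while $V_\theta=\env_\theta(0)\le\env_{\theta_i}(0)=V_i$ for all $i$ (by $\PSH(\theta)\subset\PSH(\theta_i)$) gives $V_\theta\le V$; hence $V_\theta=V$ is $\theta$-psh. (Alternatively, once $\PSH(\theta)\ne\emptyset$ is known, Corollary~\ref{cor:envusc}(i) applied to the usc function $0$ yields directly that $V_\theta$ is $\theta$-psh.) The main obstacle is exactly the non-degeneracy of the limit: everything hinges on the uniform normalization $V_i(\xtriv)=0$, which prevents the envelopes from collapsing as $\theta_i\to\theta$, together with the availability of Theorem~A to guarantee that each $V_i$ is genuinely $\theta_i$-psh. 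Note that the nef case~\cite[Theorem~5.20]{trivval} would not suffice here, since a psef class need not be a decreasing limit of nef classes.
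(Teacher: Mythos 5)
Your proof is correct and follows essentially the same route as the paper: approximate the psef class $\theta$ by big classes $\theta_i\searrow\theta$, use Theorem~A (via the envelope property and Corollary~1.4) to make each $V_i=\pp_{\theta_i}(0)$ genuinely $\theta_i$-psh, rule out collapse via the normalization $V_i(v_\triv)=0$, pass to the decreasing limit using~\cite[Theorem~4.5]{trivval}, and identify the limit with $\pp_\theta(0)$ by the same two-sided comparison. Your explicit Fubini--Study computation showing $V_i(v_\triv)=0$ merely spells out what the paper asserts in one line, so the two arguments coincide in substance.
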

\begin{proof}
  It follows from~\cite[Definition~4.1]{trivval} that $\PSH(\theta)\ne\emptyset$ only if $\theta$ is psef. First suppose $\theta$ is big. By Theorem~A, $V_\theta:=\pp_\theta(0)$ is $\theta$-psh. Note that $V_\theta(v_\triv)=\sup V_\theta=0$, where $v_\triv$ is the trivial valuation on $X$.

  Now suppose $\theta$ is merely psef, and pick a sequence $(\theta_m)_1^\infty$ of big classes converging to $\theta$, such that $\theta\le\theta_{m+1}\le\theta_m$ for all $m$. As $\PSH(\theta_{m+1})\subset\PSH(\theta_m)$ for all $m$, the sequence $(V_{\theta_m})_m$ is pointwise decreasing on $X^\an$. Let $\f$ be its limit. We have $\sup\f=\f(v_\triv)=0$, and $\f\in\PSH(\theta_m)$ for every $m$. It now follows from~\cite[Theorem~4.5]{trivval} that $\f\in\PSH(\theta)$. Finally, it is easy to see that $\f=\pp_\theta(0)$. Indeed, $\f\le0$, and if $\p\in\PSH(\theta)$ satisfies $\p\le0$, then $\p\in\PSH(\theta_m)$ for all $m$, so $\p\le V_{\theta_m}$, and hence $\p\le\f$.
\end{proof}  

By~\cite[Theorem~5.11]{trivval}, Theorem~A now implies the following compactness result.
\begin{cor}\label{cor:contenvcomp} Under the assumptions on $X$ of Theorem~A, the set
$$
\PSH_{\sup}(\theta)=\left\{\f\in\PSH(\theta)\mid\sup\f=0\right\}
$$
is compact for any psef class $\theta\in\Num(X)$.
\end{cor}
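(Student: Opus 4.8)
The plan is to obtain this as a formal consequence of Theorem~A together with the general compactness criterion~\cite[Theorem~5.11]{trivval}, so that all the analytic substance is carried by the envelope property.

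First I would dispose of the degenerate case. If $\theta$ is not psef, then $\PSH(\theta)=\emptyset$, so $\PSH_{\sup}(\theta)=\emptyset$ is trivially compact. I may therefore assume $\theta$ is psef, in which case the preceding corollary ensures $\PSH(\theta)\ne\emptyset$, so that $\PSH_{\sup}(\theta)$ is a nonempty set to which the criterion applies.

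The core of the argument is then simply to feed the envelope property into~\cite[Theorem~5.11]{trivval}: by Theorem~A, under the standing hypotheses ($X$ smooth and $\charac k=0$ or $\dim X\le2$) the class $\theta$ has the envelope property, which is exactly the input required there. It is worth recalling why this is the right hypothesis. Relative compactness is the easy half: choosing $\om\in\Amp(X)$ with $\om\ge\theta$, so that $\PSH(\theta)\subset\PSH(\om)$, every $\f\in\PSH_{\sup}(\theta)$ obeys $-\te_\om(v)\le\f(v)\le0$ for all $v\in X^\div$, by the Alexander--Taylor capacity bound already used in the proof of Lemma~\ref{lem:contenv}; this traps $\PSH_{\sup}(\theta)$ inside a compact set of functions for the weak topology of pointwise convergence on $X^\div$. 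The decisive half is closedness: a pointwise limit of functions in $\PSH_{\sup}(\theta)$ need not be $\theta$-psh as it stands, but its upper semicontinuous regularization is, precisely because $\theta$ has the envelope property, while the normalization $\sup=0$ survives in the limit since the supremum of a $\theta$-psh function is attained at the trivial valuation $v_\triv$.

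Accordingly, I expect the only genuine difficulty to lie in Theorem~A itself; granting it, the corollary is purely formal, the remaining task being to match the topology and normalization conventions of~\cite[Theorem~5.11]{trivval} with those in force here.
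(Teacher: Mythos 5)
Your proposal is correct and takes essentially the same route as the paper: the corollary is deduced purely formally by combining Theorem~A (the envelope property for $\theta$) with the compactness criterion of~\cite[Theorem~5.11]{trivval}, which is exactly what the paper does, with no further argument given there. Your additional sketch of why the criterion works (Alexander--Taylor bound for relative compactness, envelope property for closedness, sup attained at $v_\triv$) is consistent with~\cite{trivval} but not needed once that theorem is invoked.
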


Finally, as an immediate consequence of Theorem~A and~\cite[Theorem~6.31]{trivval}, we have the following version of Siu's decomposition theorem.
\begin{cor}\label{cor:Siu} Suppose that $X$ satisfies the assumptions of Theorem~A. Pick $\theta\in\Num(X)$ and an effective $\Q$-Cartier divisor $E$. Then, for any $\f\in\PSH(\theta)$, we have: 
$$
\f\le\log|s_E|+O(1)\Longleftrightarrow\f-\log|s_E|\in\PSH(\theta-E).
$$ 
\end{cor}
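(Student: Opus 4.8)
My plan is to treat the two implications separately: the reverse one is formal, while the forward one is exactly where Theorem~A does the work, via the conditional Siu decomposition of \cite[Theorem~6.31]{trivval}.

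The implication ``$\Leftarrow$'' is immediate and uses nothing about $X$. If $\psi:=\f-\log|s_E|$ lies in $\PSH(\theta-E)$, then $\psi$ is usc on the compact space $\Xan$, hence bounded above by some constant $C$; this says precisely $\f\le\log|s_E|+C$, i.e.\ $\f\le\log|s_E|+O(1)$ (on $\supp(E)^\an$, where $\log|s_E|=-\infty$, the inequality forces $\f\equiv-\infty$, consistently with $\psi$ being finite-valued there).

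For ``$\Rightarrow$'', set $U:=X\setminus\supp E$ and $\psi:=\f-\log|s_E|$. On the Zariski-open set $\Uan$ the function $\log|s_E|$ is real-valued and serves as a continuous local potential for the class $E$, so there $\psi$ is a bounded-above function that is $(\theta-E)$-psh in the local sense; the point is to extend its usc regularization $\psi^\star$ across the proper Zariski-closed set $\{\log|s_E|=-\infty\}\subseteq\supp(E)^\an$ as a \emph{global} $(\theta-E)$-psh function. This extension is exactly what \cite[Theorem~6.31]{trivval} provides, under the hypothesis that the classes involved have the envelope property. Under our standing assumptions --- $X$ smooth and $\charac k=0$ or $\dim X\le2$ --- Theorem~A supplies the envelope property for \emph{every} numerical class, in particular for $\theta-E$ and $\theta$, so \cite[Theorem~6.31]{trivval} applies and yields $\psi^\star\in\PSH(\theta-E)$.

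It then remains to see that $\psi^\star+\log|s_E|=\f$ on all of $\Xan$, i.e.\ that the extension identifies $\psi^\star$ with $\f-\log|s_E|$. By construction this equality holds on the Zariski-open set $\Uan$, and since $\f$ and $\psi^\star+\log|s_E|$ are both $\theta$-psh, the equality propagates from $\Uan$ to $X^\div$ and then to all of $\Xan$ by the domination principle \cite[Theorem~4.22]{trivval}, exactly as in the proof of Theorem~A. The one genuinely non-formal step is the extension of $\psi^\star$ across $\supp(E)^\an$, and the sole role of the smoothness and characteristic/dimension hypotheses is to furnish, through Theorem~A, the envelope property that \cite[Theorem~6.31]{trivval} needs in order to carry it out.
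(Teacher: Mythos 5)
Your proposal is correct and takes essentially the same route as the paper: the paper gives no separate argument, deriving the corollary as an immediate consequence of Theorem~A (which supplies the envelope property for every numerical class, in particular $\theta-E$) together with the conditional Siu decomposition of \cite[Theorem~6.31]{trivval}, exactly as you do. Your explicit treatment of the easy implication and your reconstruction of the mechanism inside \cite[Theorem~6.31]{trivval} are harmless elaborations of that same two-ingredient argument.
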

As before, $\log|s_E|=m^{-1}\log|s_{mE}|$, where $s_{mE}$ is the canonical global section of $\cO_X(mE)$ for any $m\ge1$ such that $mE$ is integral.

%
%
%
%
\section{Proof of Theorem~B}
We start by proving:
\begin{lem}\label{lem:enpairpull} Let $\pi\colon\tilde X\to X$ be a projective birational morphism, and pick a bounded $\om$-psh function $\p$. Then $(\om,\p)^{n+1}=(\pi^\star\om,\pi^\star\p)^{n+1}$. 
\end{lem}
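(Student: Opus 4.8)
The plan is to reduce the statement to the PL case already settled in Lemma~\ref{lem:PL}, by approximating $\p$ from above by $\om$-psh PL functions and then passing to the limit on both sides. Everything will be definitional or a direct appeal to Lemma~\ref{lem:PL}, \emph{except} the continuity of the energy pairing on the right-hand side, where the pulled-back class $\pi^\star\om$ is only nef and big rather than ample; this is where the real work lies.

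First I set up both sides. Since composition with the induced map $\tilde X^\an\to X^\an$ carries $\PSH(\om)$ into $\PSH(\pi^\star\om)$ (functoriality of $\theta$-psh functions under pullback, \cite[\S4]{trivval}), the function $\pi^\star\p$ is bounded and $\pi^\star\om$-psh. Choosing $\tilde\om\in\Amp(\tilde X)$ with $\tilde\om\ge\pi^\star\om$ and using monotonicity of $\PSH$ in the class, namely $\PSH(\pi^\star\om)\subset\PSH(\tilde\om)$, we get $\pi^\star\p\in\cE^1(\tilde\om)$; thus $\pi^\star\p$ is of finite energy and the right-hand side $(\pi^\star\om,\pi^\star\p)^{n+1}$ is defined through the multilinear finite-energy extension of the pairing (\S\ref{sec:enpair}, \cite[\S7]{trivval}). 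By the very definition of the energy pairing for $\om$-psh functions, I may pick a decreasing net $(\p_j)$ in $\PSH(\om)\cap\PL(X)$ with $\p_j\searrow\p$ and $(\om,\p_j)^{n+1}\to(\om,\p)^{n+1}$. The pullbacks $\pi^\star\p_j\in\PSH(\pi^\star\om)\cap\PL(\tilde X)$ then decrease to $\pi^\star\p$, and Lemma~\ref{lem:PL}, applied with all classes equal to $\om$ (resp.\ $\pi^\star\om$) and all functions equal to $\p_j$ (resp.\ $\pi^\star\p_j$), gives
\[
(\om,\p_j)^{n+1}=(\pi^\star\om,\pi^\star\p_j)^{n+1}
\]
for every $j$.

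It remains to let $j\to\infty$. The left-hand side converges to $(\om,\p)^{n+1}$ by construction, so the only obstacle is to show $(\pi^\star\om,\pi^\star\p_j)^{n+1}\to(\pi^\star\om,\pi^\star\p)^{n+1}$ although $\pi^\star\om$ is not ample. I propose to reduce this to the ample case by domination: setting $\beta:=\tilde\om-\pi^\star\om$ (nef) and using multilinearity of the pairing in the metrized-class arguments, which is unconstrained for PL functions (\cite[\S3.2]{trivval}), the decomposition $(\pi^\star\om,\pi^\star\p_j)=(\tilde\om,\pi^\star\p_j)-(\beta,0)$ yields
\[
(\pi^\star\om,\pi^\star\p_j)^{n+1}=\sum_{k=0}^{n+1}\binom{n+1}{k}(-1)^k(\tilde\om,\pi^\star\p_j)^{n+1-k}\cdot(\beta,0)^k,
\]
and likewise for $\pi^\star\p$ via the finite-energy extension. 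Each summand is a mixed pairing in which the ample class $\tilde\om$ carries the functions $\pi^\star\p_j\searrow\pi^\star\p\in\cE^1(\tilde\om)$ while the remaining slots carry the fixed data $(\beta,0)$; by continuity of the mixed energy pairing along decreasing nets of finite-energy functions (\cite[\S7]{trivval}) each summand converges to the corresponding summand for $\pi^\star\p$. Summing over $k$ gives convergence of the right-hand side, and comparing limits with the displayed PL identity yields $(\om,\p)^{n+1}=(\pi^\star\om,\pi^\star\p)^{n+1}$, as desired. The main difficulty is thus isolated in this last continuity step for the non-ample class $\pi^\star\om$, which the domination-plus-multilinearity device turns into an application of the known ample-class continuity.
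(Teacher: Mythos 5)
Your proof is correct and takes essentially the same route as the paper's: approximate $\p$ from above by a decreasing net $(\p_j)$ in $\PL(X)\cap\PSH(\om)$, apply Lemma~\ref{lem:PL} termwise, and pass to the limit using continuity of the energy pairing along decreasing nets, with the ample class $\tilde\om\ge\pi^\star\om$ guaranteeing $\pi^\star\p_j,\pi^\star\p\in\cE^1(\tilde\om)$. The only difference is cosmetic: where the paper invokes \cite[Theorem~7.14~(iii)]{trivval} directly for the extended pairing (including the non-ample class $\pi^\star\om$), you rederive that continuity by the binomial expansion in $(\tilde\om,\pi^\star\p_j)$ and $(\beta,0)$ with $\beta=\tilde\om-\pi^\star\om$, which is a harmless, more explicit justification of the same step.
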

Here $\pi^\star\om$ may not be ample, but the right hand side is well-defined, as $\pi^\star\p$ is a function of finite energy. In fact $\pi^\star\p\in\cE^1(\tilde\om)$ for any ample class $\tilde\om\ge\pi^\star\om$.
\begin{proof}
  The case when $\p\in\PL(X)$ follows from Lemma~\ref{lem:PL}.
  In the general case, write $\p$ as the pointwise limit of a decreasing net $(\p_j)$ in $\PL\cap\PSH(\om)$, and pick $\tilde\om\in\Amp(\tilde X)$ such that $\tilde\om\ge\pi^\star\om$. Then $\pi^\star\p_j$ decreases to $\pi^\star\p$ pointwise on $\tilde{X}^\an$. Moreover, $\pi^\star\p_j$ and $\pi^\star\p$ are $\tilde\om$-psh, and hence lie in $\cE^1(\tilde\om)$ as they are bounded.  By~\cite[Theorem~7.14~(iii)]{trivval} we have 
$(\om,\p_j)^{n+1}\to(\om,\p)^{n+1}$ and $(\pi^\star\om,\pi^\star\p_j)^{n+1}\to(\pi^\star\om,\pi^\star\p)^{n+1}$.
Now $(\pi^\star\om,\pi^\star\p_j)^{n+1}=(\om,\p_j)^{n+1}$ for all $j$ by the PL case, and the result follows.
\end{proof}
As stated in the introduction, we introduce:
\begin{defi}
  Let $X$ be a projective variety, and $\om\in\Num(X)$ an ample class. We say that $(X,\om)$ has the weak envelope property if there exists a projective birational morphism $\pi\colon\tilde X\to X$, and an ample class $\tilde\om\in\Num(\tilde X)$, such that $\tilde\om\ge\pi^\star\om$ and $(\tilde X,\tilde\om)$ has the envelope property.
\end{defi}
\begin{lem}
  If $\charac k=0$ or $\dim X\le 2$, then any ample class $\om\in\Num(X)$ has the weak envelope property.
\end{lem}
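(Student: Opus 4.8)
The plan is to reduce everything to the smooth case handled by Theorem~A, using a resolution of singularities. Under the standing hypothesis $\charac k=0$ or $\dim X\le 2$, such a resolution is available---by Hironaka's theorem in characteristic zero, and by the classical resolution of surfaces (valid in any characteristic) when $\dim X\le 2$. So I would first fix a projective birational morphism $\pi\colon\tilde X\to X$ with $\tilde X$ smooth and projective. Since $\pi$ is birational, $\dim\tilde X=\dim X$, so $\tilde X$ still satisfies $\charac k=0$ or $\dim\tilde X\le 2$; Theorem~A then applies to $\tilde X$ and tells us that every numerical class on $\tilde X$---in particular every ample one---has the envelope property.

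It then remains to exhibit an ample class $\tilde\om\in\Num(\tilde X)$ with $\tilde\om\ge\pi^\star\om$. I would simply pick any ample class $H\in\Amp(\tilde X)$ and set $\tilde\om:=\pi^\star\om+H$. Here $\pi^\star\om$ is nef, being the pullback of the ample (hence nef) class $\om$, so $\tilde\om$ is the sum of a nef and an ample class and is therefore ample; meanwhile $\tilde\om-\pi^\star\om=H$ is ample and in particular nef, which is exactly the relation $\tilde\om\ge\pi^\star\om$. Combined with the envelope property for $(\tilde X,\tilde\om)$ from the first paragraph, this shows that $(X,\om)$ has the weak envelope property.

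I do not expect any real obstacle here: the only substantive inputs are the availability of resolution of singularities in the two cases covered by the hypothesis and Theorem~A itself. The one subtlety worth flagging is that $\pi^\star\om$ is only nef, not ample, on $\tilde X$---this is precisely why I perturb by an arbitrary ample $H$, which restores ampleness while preserving the inequality $\tilde\om\ge\pi^\star\om$.
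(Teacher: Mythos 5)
Your proof is correct and follows essentially the same route as the paper: resolve singularities (Hironaka in characteristic zero, classical surface resolution in dimension at most two), apply Theorem~A on the smooth model, and choose an ample class dominating $\pi^\star\om$. Your explicit choice $\tilde\om=\pi^\star\om+H$ just spells out why such a class exists, which the paper leaves implicit.
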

\begin{proof}
  In both cases, we can pick $\pi\colon\tilde X\to X$ as a resolution of singularities, and then pick any ample class $\tilde\om\ge\pi^\star\om$. By~\cite[Theorem~5.20]{trivval} (or Theorem~A), the envelope property holds for $(\tilde X,\tilde\om)$, and we are done.
\end{proof}
\begin{proof}[Proof of Theorem~B] Set $\tau:=\env_\om(\f)$. For any $\p\in\PSH(\om)$, we have $\p\le\f\Longleftrightarrow\p\le\tau$, and hence $\en_\om(\f)=\en_\om(\tau)\le\en_\om(\tau^\star)$. Since $\tau$ is the pointwise supremum of the family $\cF=\{\p\in\PSH(\om)\mid\p\le\f\}$, and since $\cF$ is stable under finite max, we can find an increasing net $(\p_i)$ of $\om$-psh functions such that $\sup_i\p_i=\tau$ pointwise on $X^\an$. Replacing $\p_i$ with $\max\{\p_i,\inf\p\}$, we can further assume that $\p_i$ is bounded. 

  By assumption, we can find a projective birational morphism $\pi\colon\tilde X\to X$, and an ample class $\tilde\om\in\Num(\tilde X)$ such that $\tilde\om\ge\pi^\star\om$ and $(\tilde X,\tilde\om)$ has the envelope property.
  Now $\tilde\tau:=\pi^\star\tau=\sup_i\pi^\star\p_i$ with $\pi^\star\p_i\in\PSH(\tilde\om)$, and it follows that $\tilde\tau^\star$ is $\tilde\om$-psh, and coincides with $\tilde\tau=\sup_i\pi^\star\p_i=\lim_i\sup\pi^\star\p_i$ on $\tilde X^\div$. By~\cite[Theorem~7.38]{trivval}, we get $(\pi^\star\om,\pi^\star\p_i)^{n+1}\to(\pi^\star\om,\tilde\tau^\star)^{n+1}$. On the other hand, Lemma~\ref{lem:enpairpull} yields 
\begin{equation*}
(\pi^\star\om,\pi^\star\p_i)^{n+1}=(\om,\p_i)^{n+1}=(n+1)\vol(\om)\en_\om(\p_i)\le (n+1)\vol(\om)\en_\om(\tau),
\end{equation*}
and we infer
\begin{equation}\label{equ:entau}
(\pi^\star\om,\tilde\tau^\star)^{n+1}\le (n+1)\vol(\om)\en_\om(\tau). 
\end{equation}
By~\cite[Theorem~5.6]{trivval} we also have $\tau^\star=\tau$ on $X^\div$. Each  $\p\in\PSH(\om)$ such that $\p\le\tau^\star$ on $X^\an$ therefore satisfies $\p\le\tau$ on $X^\div$ (see~\cite[Theorem~5.6]{trivval}); hence $\pi^\star\p\le\tilde\tau\le\tilde\tau^\star$ on $\tilde X^\div$, which implies $\pi^\star\p\le\tilde\tau^\star$ on $\tilde X^\an$ (see~\cite[Theorem~4.22]{trivval}). Assuming $\p$ bounded, we get
$$
(\om,\p)^{n+1}=(\pi^\star\om,\pi^\star\p)^{n+1}\le(\pi^\star\om,\tilde\tau^\star)^{n+1},
$$
where the equality follows from Lemma~\ref{lem:enpairpull}, and the inequality from the monotonicity of the energy pairing, see~\cite[Theorem~7.1]{trivval}.
Taking the supremum over $\p$ now yields 
$$
(n+1)\vol(\om) \en_\om(\tau^\star)\le(\pi^\star\om,\tilde\tau^\star)^{n+1}. 
$$
Combined with~\eqref{equ:entau}, this implies $\en_\om(\tau^\star)\le\en_\om(\tau)$, and the result follows. 
\end{proof}

%
%
%
%

%
%
%
%
%
%

\begin{thebibliography}{ELMNP06}
  \bibitem[BE21]{BE} 
  S.~Boucksom, D.~Eriksson.
\newblock \emph{Spaces of norms, determinant of cohomology and Fekete points in 
  non-Archimedean geometry}.
\newblock Adv. Math. \textbf{378} (2021), 107501.

\bibitem[BFJ16]{siminag}
S.~Boucksom, C.~Favre and M.~Jonsson.
\newblock \emph{Singular semipositive metrics in non-Archimedean geometry}.
\newblock J. Algebraic Geom. \textbf{25} (2016), 77--139. 

\bibitem[BJ22a]{trivval}
S.~Boucksom and M.~Jonsson. 
\newblock\emph{Global pluripotential theory over a trivially valued field}.
\newblock Ann. Math. Fac. Toulouse \textbf{31} (2022), 647--836.

\bibitem[BJ22b]{nakstab1new}
S.~Boucksom and M.~Jonsson. 
\newblock \emph{A non-Archimedean approach to K-stability, I: Metric geometry of spaces of test configurations and valuations}.
\newblock \texttt{arXiv:2107:11221v2}. To appear in Ann. Inst. Fourier. 

\bibitem[BJ22c]{nakstab2}
S.~Boucksom and M.~Jonsson. 
\newblock \emph{A non-Archimedean approach to K-stability, II: divisorial stability and openness}.
\newblock \texttt{arXiv:2206.09492}. To appear in Crelle's journal. 

 \bibitem[ELMNP06]{ELMNP}
 L.~Ein, R.~Lazarsfeld, M~Musta\c{t}\v{a}, M.~Nakamaye, M.~Popa. 
 \newblock\emph{Asymptotic invariants of base loci}. 
 \newblock Ann. Inst. Fourier \textbf{56} (2006), 1701--1734.

\bibitem[GJKM19]{GJKM19}
W.~Gubler, P.~Jell, K.~K\"unnemann and M.~Sombra.
\newblock \emph{Continuity of plurisubharmonic envelopes in non-Archimedean geometry 
  and test ideals (with an appendix by Jos\'e Ignacio Burgos Gil and Mart\'in Sombra)}.
\newblock Ann. Inst. Fourier. \textbf{69} (2019), 2331--2776.
\end{thebibliography}
\end{document}